\newtheorem{theorem}{Theorem}[section]
\newtheorem{lemma}[theorem]{Lemma}
\theoremstyle{definition}
\newtheorem{definition}[theorem]{Definition}
\newtheorem{example}[theorem]{Example}
\def\sgn{\operatorname{sgn}}
\def\dotminussym#1#2{%
  \setbox0=\hbox{$\m@th#1-$}%
  \kern.5\wd0%
  \hbox to 0pt{\hss\hbox{$\m@th#1-$}\hss}%
  \raise.6\ht0\hbox to 0pt{\hss$\m@th#1.$\hss}%
  \kern.5\wd0}
\mathchardef\mhyphen="2D
\begin{document}

\title{A Worked Example of the Functional Interpretation}
\author{Henry Towsner}
\date{\today}


\maketitle


\section{Introduction}

Suppose we have a \emph{non-constructive} proof: a proof that some quantity exists but which does not actually tell us what the value is.  Such a thing might happen because the proof proceeds by contradiction, because it uses the axiom of choice, or because the proof depends on an abstract technique which takes us far afield from the quantity we original set out to proof the existence of.

We might then ask whether there is any constructive proof of the statement---a proof which actually calculates the quantities in the statement.  The answer may be no.  Even if the answer is yes, it may require finding a completely different proof, which could be as hard as---or harder than---proving the theorem in the first place.

Our topic, however, is circumstances which guarantee that the answer is ``yes'': when knowing that there is a non-constructive proof guarantees the existence of a constructive proof, and further, tells us how to find it.  Strikingly, at least to those unfamiliar with proof theory, both the circumstances and the method for finding a constructive proof are \emph{syntactic}: they depend on the written form of the theorem and its proof.  While there exists more than one such method, our focus in this paper will be the \emph{functional interpretation} (also called the ``Dialectica'' interpretation, after the journal where it first appeared \cite{MR0102482}).

If we hold a strongly formal view of mathematics, we can ask for proper ``meta-theorems'': we could take the view that a proof in mathematics is always, at heart, a formal deduction in our favorite system of axioms---say, ZFC.  Then we could hope for a formal proof that, given any deduction of a theorem with the necessary syntactic property, there exists a constructive deduction of the same theorem.  For many choices of axioms (not currently including ZFC, but including systems which suffice to formalize most of mathematical practice), such meta-theorems actually exist.

These theorems are of limited applicability by themselves, however.  Even if one believes that actual proofs, as written in textbooks and articles, are intended as descriptions of formal deductions, obtaining those formal deductions is an arduous (and tedious) process.  The usefulness of the functional interpretation comes from the fact that it can be applied, with a feasible amount of effort, to actual proofs as written by and for humans.  Of course, we cannot have a theorem about ``all proofs accepted by mathematicians''; in the broader setting we have only a heuristic: the functional interpretation is a practical method for producing ordinary constructive proofs from ordinary non-constructive ones as already written up in journals.\footnote{Because proof theory is historically tied to intuitionist and formalist philosophical views, its dependence on these philosophies is sometimes overstated.  One need not believe that formal deductions, constructive proofs, or syntax play any special role in mathematics for proof theoretic methods to be useful.  Even someone who believes that axiomatic proofs are artificial constructs of no intrinsic importance should recognize that large swaths of mathematics just happen to be formalizable, and therefore that methods derived from their study just happen to be useful in practice.  Our attitude can be the one attributed to Niels Bohr regarding the lucky horseshoe above his door: ``I am told it works even if you don't believe in it''.}

As the title promises, our goal in this paper is to introduce this method by focusing on a concrete example.  In section \ref{sec:jacksons_thm} we'll present a non-constructive proof of a theorem on approximations of $L_1$ functions.  In section \ref{sec:quant} we'll present a corresponding quantitative theorem with a constructive proof.  These two sections will be completely elementary---all that's needed is some very basic real analysis.  (There is nothing new in our analysis of this theorem, which derives from Kohlenbach and Oliva's work \cite{MR1966746}.  We have chosen this example because the underlying theorem and proof are simple enough that they can be discussed in detail in a reasonable space.)

With a motivating example in place, in section \ref{sec:func_interp} we will finally introduce the functional interpretation, illustrating that the example in section \ref{sec:quant} is an instance of a general method.  This section will necessarily involve some actual formal logic---we'll assume some familiarity with ordinary first-order logic and the notion of computability, although we will keep the dependence as minimal as possible.

We will conclude with some references to further applications in the literature.

The author is grateful to Jeremy Avigad and Ulrich Kohlenbach for helpful suggestions on a draft of this paper.
 

\section{A Theorem of Jackson's}\label{sec:jacksons_thm}

Throughout, we will only be concerned with real-valued functions on $[0,1]$, which will refer to as just ``functions''.  When we write $\int f\,dx$ without bounds, we always mean the integral $\int_0^1 f\,dx$.

\begin{definition}
We say $f$ is an \emph{$L_1$-function} if $\int|f|\,dx$ exists.  The \emph{$L_1$-norm} is defined on such functions by
\[||f||_1=\int|f|\,dx.\]
\end{definition}
Many bizarre, complicated, or difficult-to-work-with functions are nonetheless $L_1$, so it is natural to ask about approximating such functions by nicer classes of functions; when the distance is measured using the $L_1$-norm, this is known as \emph{mean approximation}.  In particular:
\begin{definition}
Let $f$ be an $L_1$-function, and let $\mathcal{P}$ be a collection of $L_1$-functions.  A \emph{best (mean) approximation to $f$ in $\mathcal{P}$} is a function $p\in\mathcal{P}$ such that for every $q\in\mathcal{P}$,
\[||f-p||_1\leq ||f-q||_1.\]
\end{definition}
We will mostly be interested in approximation by polynomials of low degree.
\begin{definition}
  We write $\mathcal{P}_n$ for the collection of polynomials of degree at most $n$.
\end{definition}
When $\mathcal{P}$ is finite dimensional, the existence of a best approximation follows immediately from a theorem of Riesz \cite{MR1555146}.  Our focus will be on the question of whether the best approximation is \emph{unique}.  For an arbitrary set $\mathcal{P}$, it need not be:
\begin{example}
  Let $f$ be the function which is constantly equal to $2$, and let $\mathcal{P}$ consist of all piecewise continuous $L_1$-functions $p$ such that $||p||_1\leq 1$.  Clearly the function $p$ which is constantly equal to $1$ is a best approximation to $f$ in $\mathcal{P}$, but the function
\[q(x)=\left\{\begin{array}{ll}
2&\text{if }0\leq x\leq 1/2\\
0&\text{if }1/2<x\leq 1
\end{array}\right.\]
is \emph{also} a best approximation.
\end{example}
Uniqueness of approximations is related to a property known as \emph{strict convexity} of the norm (see, for instance, \cite{MR1656150} for more general theory on the subject of approximations); since the $L_1$-norm is not strictly convex, in general there is not a unique best mean approximation.  However for particular classes of functions, ad hoc arguments can still give uniqueness, and an example of such a result is due to Jackson \cite{MR1501177}:
\begin{theorem}
  Let $f$ be a continuous function.  Then for any $n$, there is a unique best approximation to $f$ in $\mathcal{P}_n$.
\end{theorem}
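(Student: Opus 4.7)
The plan is to handle existence and uniqueness separately: existence follows immediately from Riesz's theorem as cited in the excerpt, since $\mathcal{P}_n$ is finite-dimensional, so I would focus on uniqueness. Suppose $p, q \in \mathcal{P}_n$ are both best approximations to $f$, with common error $E$. The first step is to apply the pointwise triangle inequality to the midpoint $h = (p+q)/2 \in \mathcal{P}_n$: since $|f - h| \leq \tfrac{1}{2}|f - p| + \tfrac{1}{2}|f - q|$ pointwise, integrating yields $||f - h||_1 \leq E$, and optimality forces equality. As the integrand of the difference is continuous, nonnegative, and has vanishing integral, it vanishes identically, giving the pointwise equality case of the triangle inequality: $(f(x) - p(x))(f(x) - q(x)) \geq 0$ for all $x \in [0,1]$. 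Setting $g = f - h$ and $d = (p - q)/2$, this is the same as $|g(x)| \geq |d(x)|$ everywhere.

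Now assume for contradiction that $p \neq q$. Then $d$ is a nonzero polynomial of degree at most $n$, so it has at most $n$ zeros, partitioning $[0, 1]$ into $m \leq n + 1$ open subintervals $I_1, \ldots, I_m$. On each $I_k$, $|d| > 0$ forces $|g| > 0$, and then by continuity $\sgn(g)$ takes a constant value $\eta_k \in \{+1, -1\}$ there. Define a step function $\chi : [0, 1] \to \{-1, +1\}$ by $\chi = \eta_k$ on $I_k$ (values at the finitely many zeros of $d$ are irrelevant). The standard Kolmogorov criterion for $L_1$-best approximation applied to the optimal $h$, combined with the observation that $\{f = h\} \subseteq \{d = 0\}$ is a finite set, yields
\[ \int_0^1 \chi(x) \, r(x) \, dx = 0 \quad \text{for every } r \in \mathcal{P}_n. \]

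This orthogonality is impossible. Let $J \subseteq \{1, \ldots, m - 1\}$ index the consecutive pairs with $\eta_k \neq \eta_{k+1}$, and for each $j \in J$ let $y_j$ be the zero of $d$ separating $I_j$ from $I_{j+1}$. Then $P(x) = \pm \prod_{j \in J}(x - y_j)$ is a polynomial of degree $|J| \leq m - 1 \leq n$ whose sign changes exactly at the jump points of $\chi$; with a suitable choice of the global $\pm$, $\sgn(P)$ agrees with $\chi$ almost everywhere, so $\int \chi \cdot P \, dx = \int |P| \, dx > 0$, contradicting the orthogonality. Hence $p = q$. The main obstacle in filling in this plan is justifying the Kolmogorov criterion --- a routine computation of the subdifferential of the $L_1$-norm, established by examining the one-sided directional derivatives of $t \mapsto ||f - h - tr||_1$ at $t = 0$ and using that a minimum forces these to have the appropriate signs.
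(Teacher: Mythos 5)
Your proof is correct, and after the shared opening it takes a genuinely different route to the contradiction. Both arguments start the same way: the midpoint $h=(p+q)/2$ is again a best approximation, and the vanishing integral of the nonnegative continuous function $\tfrac12|f-p|+\tfrac12|f-q|-|f-h|$ (the paper's Lemma \ref{cont_is_zero_qual}) forces pointwise equality in the triangle inequality. The paper's endgame is Lemma \ref{best_approx_zero_qual}: optimality of $h$ yields $n+1$ zeros of $f-h$, the pointwise equality forces $p$ and $q$ to agree with $f$ at each such zero, and $p-q$ dies as a degree-$\le n$ polynomial with $n+1$ roots. You instead package the equality case as the domination $|f-h|\ge |d|$ with $d=(p-q)/2$, so that once $d\neq 0$ the zero set and the sign changes of $f-h$ are controlled by the at most $n$ zeros of $d$, and you contradict the orthogonality condition $\int \sgn(f-h)\,r\,dx=0$ for all $r\in\mathcal{P}_n$ by exhibiting a degree-$\le n$ polynomial matching the sign pattern. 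The ingredients (a perturbation argument plus a sign-matching polynomial) are the same, but rearranged: you bound the sign changes unconditionally from $d$ rather than via the paper's counting-zeros contradiction, at the price of needing the full $L_1$ characterization of best approximation that you call the Kolmogorov criterion, which you rightly flag as the remaining gap. That gap is real but routine, and in the special case you need (coincidence set finite) its necessity direction is exactly the paper's Lemma \ref{non_zero_red_qual}: if $\int r\,\sgn(f-h)\,dx\neq 0$, excise small intervals around the finitely many zeros, bound $|f-h|$ below on the remainder, and subtract a small multiple of $r$ to improve the approximation; your one-sided-derivative route also works, though as stated it leans on measure-theoretic tools (dominated convergence, ``almost everywhere'') that the paper deliberately avoids. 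What your arrangement buys is a cleaner, standard criterion from approximation theory; what the paper's buys is self-containedness and an explicitly perturbative structure (zeros, sign changes, the $\lambda h$ correction) that is precisely what gets mined for the quantitative version in Section \ref{sec:quant}.
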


There are a number of proofs of this theorem \cite{MR0119004,MR0100752,MR0180848}, but we will examine a direct proof (in particular, avoiding the use of measure theory) due to Cheney \cite{MR0180789}, which we break into several lemmas to make the subsequent analysis easier.  (Certain equations are labeled because we will want to refer to them later.)

We need a few definitions.

\begin{definition}
  If $f$ is a continuous function, $||f||_\infty=\sup_{x\in[0,1]}|f(x)|$.
\end{definition}
(This is a simplification of the usual $L_\infty$ norm; we do not need the $L_\infty$ norm for discontinuous functions, and so can get away with this simplified definition.)  Note that, since $f$ is assumed to be continuous, $||f||_\infty$ is defined and finite.

\begin{definition}
  If $g$ is a function, we write $\sgn g$ for the \emph{sign} function, given by
\[\sgn g(x)=\left\{\begin{array}{ll}
1&\text{if }g(x)>0\\
0&\text{if }g(x)=0\\
-1&\text{if }g(x)<0
\end{array}\right..\]
\end{definition}

\begin{lemma}\label{non_zero_red_qual}
Let $g$ and $h$ be continuous functions such that $g$ has finitely many zeros and $\int h\sgn g\,dx\neq 0$.  Then for some $\lambda$, 
\[||g-\lambda h||_1<||g||_1.\]
\end{lemma}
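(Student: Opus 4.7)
The plan is to think of $\phi(\lambda) = \|g - \lambda h\|_1$ as a function of $\lambda$ near $0$ and show that its one-sided slope in the direction of $\sgn I$, where $I := \int h \sgn g\,dx$, is strictly negative; then any sufficiently small $\lambda$ of that sign will do. The heuristic is that $\phi'(0) = -\int h \sgn g\,dx$, but we do not need the full derivative, only a one-sided estimate.

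The key pointwise observation is that whenever $|g(x)| > |\lambda h(x)|$, we have $\sgn(g - \lambda h)(x) = \sgn g(x)$, and therefore
\[|g(x) - \lambda h(x)| - |g(x)| = \sgn g(x)\bigl(g(x) - \lambda h(x)\bigr) - \sgn g(x)\, g(x) = -\lambda\, h(x)\sgn g(x).\]
So on the set where this inequality holds, the change in $|g|$ caused by subtracting $\lambda h$ is exactly linear in $\lambda$ with slope $-h\sgn g$.

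To make this usable, let $z_1,\dots,z_k$ be the (finitely many) zeros of $g$ and let $E_\delta = \bigcup_i [z_i - \delta, z_i + \delta] \cap [0,1]$. Since $g$ is continuous and nonvanishing on the compact set $[0,1]\setminus E_\delta$, it attains a positive minimum $m_\delta = \min_{x\notin E_\delta}|g(x)| > 0$ there. For $|\lambda| < m_\delta/(\|h\|_\infty + 1)$, the pointwise observation applies on all of $[0,1]\setminus E_\delta$, so
\[\int_{[0,1]\setminus E_\delta}\bigl(|g-\lambda h| - |g|\bigr)\,dx = -\lambda \int_{[0,1]\setminus E_\delta} h\sgn g\,dx,\]
while on $E_\delta$ the crude bound $\bigl||g - \lambda h| - |g|\bigr| \le |\lambda h|$ gives an error of at most $|\lambda|\,\|h\|_\infty\cdot 2k\delta$.

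The final step is to choose the parameters in the right order: first pick $\delta$ so small that the restricted integral $\int_{[0,1]\setminus E_\delta} h\sgn g\,dx$ is within $|I|/4$ of $I$ (possible because $h\sgn g$ is bounded by $\|h\|_\infty$ and $\mu(E_\delta) \le 2k\delta \to 0$) and also so that $2k\delta\,\|h\|_\infty < |I|/4$; then pick $\lambda$ with $\sgn\lambda = \sgn I$ and $|\lambda| < m_\delta/(\|h\|_\infty+1)$. Combining the two estimates gives
\[\|g - \lambda h\|_1 - \|g\|_1 \le -|\lambda|\cdot\tfrac{3|I|}{4} + |\lambda|\cdot\tfrac{|I|}{4} = -\tfrac{|\lambda||I|}{2} < 0.\]
The main subtlety is getting the quantifier order right: $\delta$ depends on $I$, $\|h\|_\infty$, and $k$, and only then does $\lambda$ get chosen in terms of $m_\delta$. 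This dependence structure is exactly what makes the proof amenable to the later quantitative analysis, since $m_\delta$ is the piece that depends on how tame the zero set of $g$ is.
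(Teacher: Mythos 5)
Your proposal is correct and is essentially the paper's own argument: both delete small neighborhoods of the finitely many zeros, use the positive minimum of $|g|$ on the complement to choose $\lambda$ small enough that $\sgn(g-\lambda h)=\sgn g$ there (making the change exactly $-\lambda h\sgn g$ on that set), bound the contribution of the deleted set crudely by $|\lambda|\,\|h\|_\infty\cdot 2k\delta$, and fix $\delta$ before $\lambda$ so the main term dominates. The only differences are cosmetic choices of constants (your $|I|/4$ splitting versus the paper's condition $2\int_B|h|\,dx<|I|$) and the derivative-of-$\|g-\lambda h\|_1$ framing.
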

\begin{proof}
Let $x_1,\ldots,x_k$ be the zeros of $g$.  We partition the interval $[0,1]$ into two sets, $A$ and $B$.  $B$ will consist of a small open interval around each $x_i$:
\[B=\bigcup_{i\leq k}(x_i-\epsilon,x_i+\epsilon)\cap[0,1]\]
while $A=[0,1]\setminus B$.  We will have to choose $\epsilon$ small enough.  Observe that
\begin{equation}\label{b4}
\int_B|h(x)|\, dx\leq ||h||_\infty\cdot k\cdot 2\epsilon,
\end{equation}
so by choosing $\epsilon$ very small, we may arrange for $\int_B|h(x)|\, dx$ to be as small as we need.

On the other hand, we know that $|\int h\sgn g\,dx|>0$, and we have
\begin{align}
\left|\int_A h\sgn g\,dx\right|  
&\geq \left|\int h\sgn g\,dx\right|-\left|\int_B h\sgn g\,dx\right|\notag\\
&\geq \left|\int h\sgn g\,dx\right|-\int_B |h|\, dx.\label{b3}
\end{align}
In particular, by choosing $\epsilon$ small enough that $2\int_B|h(x)|\, dx<\left|\int h\sgn g\,dx\right|$, we may arrange to have
\begin{equation}\label{b2}
\left|\int_A h\sgn g\,dx\right|>\int_B|h(x)|\, dx.
\end{equation}

$|g|$ is continuous, and therefore the restriction of $|g|$ to $A$ has a minimum somewhere on the closed set $A$, and since $g$ has no zeros in $A$, this infimum $m=\inf\{|g(x)|\mid x\in A\}$ must be positive.  Choose $\lambda$ so that $0<|\lambda| \cdot||h||_\infty<m$ and $\sgn\lambda=\sgn \left(\int_Ah\sgn g\,dx\right)$.  Then we have $|\lambda h(x)|<m\leq |g(x)|$ and $\sgn g=\sgn (g-\lambda h)$ for every $x\in A$.  Now we can compute:
\begin{align}
||g-\lambda h||_1
&=\int |g-\lambda h|\, dx\notag\\
&=\int_A|g-\lambda h|\, dx+\int_B|g-\lambda h|\, dx\notag\\
&=\int_A(g-\lambda h)\sgn g\,dx+\int_B|g-\lambda h|\, dx\notag\\
&=\int_A|g|\, dx-\lambda \int_A h\sgn g\,dx+\int_B|g-\lambda h|\, dx\notag\\
&=\int|g|\, dx-\int_B|g|\, dx-\lambda \int_A h\sgn g\,dx+\int_B|g-\lambda h|\, dx\notag\\
&\leq\int|g|\, dx-\lambda \int_A h\sgn g\,dx+\int_B|g|+|\lambda|\,|h| dx-\int_B|g|\, dx\notag\\
&=\int|g|\, dx-|\lambda| \left|\int_A h\sgn g\,dx\right|+|\lambda|\int_B|h|\, dx\notag\\
&=\int|g|\, dx-|\lambda|(\left|\int_A h\sgn g\,dx\right|-\int_B|h|\, dx)\label{b1}\\
&<\int|g|\, dx\notag\\
&=||g||_1\notag.
\end{align}
\end{proof}

\begin{lemma}\label{best_approx_zero_qual}
    If $f$ is continuous and $p$ is a best approximation to $f$ in $\mathcal{P}_n$ then $f-p$ has at least $n+1$ zeros.
\end{lemma}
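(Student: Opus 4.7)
The plan is to argue by contradiction using Lemma \ref{non_zero_red_qual}. Set $g = f - p$, and suppose $g$ has at most $n$ zeros in $[0,1]$; list them as $x_1 < x_2 < \cdots < x_k$ with $k \leq n$. Since $g$ is continuous, it has constant sign on each of the open intervals between consecutive $x_i$'s (and on the outer intervals). Among the $x_i$'s, distinguish the \emph{sign-change} zeros $y_1 < y_2 < \cdots < y_m$, i.e.\ those where $g$ switches from positive to negative or vice versa; certainly $m \leq k \leq n$. The goal is to produce a polynomial $h \in \mathcal{P}_n$ with $\int h \sgn g \, dx \neq 0$, since then Lemma \ref{non_zero_red_qual} supplies a $\lambda$ with $\|g - \lambda h\|_1 < \|g\|_1$; because $p + \lambda h \in \mathcal{P}_n$, this contradicts the optimality of $p$.

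The natural candidate is
\[ h(x) = \epsilon \prod_{j=1}^{m} (x - y_j), \]
which has degree $m \leq n$, so $h \in \mathcal{P}_n$. The sign of $h$ is constant on each open interval between consecutive $y_j$'s and flips at each $y_j$; by the definition of the $y_j$'s the sign of $g$ does exactly the same thing. Hence for a suitable choice of $\epsilon \in \{+1, -1\}$, one has $h(x)\, \sgn g(x) \geq 0$ for all $x \in [0,1]$, and strictly positive off the finite zero set of $h$. Consequently $\int h \sgn g \, dx > 0$, as required, and the lemma produces the desired better approximation.

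The main obstacle is dealing with zeros of $g$ at which $g$ does \emph{not} change sign: if one naively takes $h(x) = \prod_{i=1}^{k}(x - x_i)$, then at a non-sign-change zero $x_i$ of $g$ the product $h$ flips sign while $\sgn g$ does not, so $h \sgn g$ will be negative on one of the adjacent intervals and the integral is no longer visibly positive. The fix above---restricting the product to the true sign-change zeros $y_j$---is what makes everything line up, and it is crucial that throwing away some of the zeros can only \emph{decrease} the degree, keeping $h$ inside $\mathcal{P}_n$. Verifying that this $h$ is genuinely a polynomial of degree at most $n$ and that $h \sgn g$ is non-negative (with positive integral) is then a direct check, after which the appeal to Lemma \ref{non_zero_red_qual} closes the argument.
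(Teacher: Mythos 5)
Your proposal is correct and is essentially the paper's own argument: Cheney's proof likewise takes the product over exactly the (interior) sign-change points of $g=f-p$, notes that $h\sgn g$ then has constant sign and nonzero integral, and invokes Lemma \ref{non_zero_red_qual} to contradict optimality of $p$. Your explicit sign factor $\epsilon\in\{+1,-1\}$ and the discussion of non-sign-change zeros are just a more verbose rendering of the same choice the paper makes silently.
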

\begin{proof}
Let $g(x)=f(x)-p(x)$.  Suppose the statement is false, so $g$ has at most $n$ zeros.  Since $f$, and therefore $g$, is continuous, there are at most $n$ points where $g$ changes sign.  For some $m\leq n$, we may choose points
\[0<r_1<r_2<\cdots<r_m<1\]
so that the $r_i$ are exactly the interior points where $g$ changes sign.  Consider the polynomial
\[h(x)=\prod_{i=1}^{m}(x-r_i).\]
Since $h(x)$ also changes sign exactly at the $r_i$, it follows that $h\sgn g$ is either always non-negative or always non-positive, and is not constantly $0$, and therefore
\begin{equation}
  \label{non_zero}
\int h\sgn g\,dx\neq 0.  
\end{equation}

By Lemma \ref{non_zero_red_qual}, there is a $\lambda$ such that $||g-\lambda h||_1<||g||_1$, and equivalently,
\[||f-(p+\lambda h)||_1=||g-\lambda h||_1<||g||_1.\]
But $p+\lambda h$ is a polynomial of degree at most $n$, contradicting the assumption that $p$ was a best approximation.
\end{proof}

In order to derive (\ref{non_zero}), we used the following seemingly trivial fact, which we will need again:
\begin{lemma}\label{cont_is_zero_qual}
 Let $q$ be a continuous function with $\int |q(x)|\, dx=0$.  Then $||q||_\infty=0$.
\end{lemma}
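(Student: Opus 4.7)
The plan is to argue by contrapositive: assume $\|q\|_\infty > 0$ and produce a strictly positive lower bound on $\int |q(x)|\, dx$. Since $q$ is continuous on $[0,1]$, so is $|q|$, and the supremum is attained at some point $x_0 \in [0,1]$ with $|q(x_0)| = \|q\|_\infty =: c > 0$.

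Next I would use continuity of $|q|$ at $x_0$: choose $\delta > 0$ such that whenever $x \in [0,1]$ and $|x - x_0| < \delta$, we have $|q(x)| > c/2$. Let $I = (x_0 - \delta, x_0 + \delta) \cap [0,1]$; note that $I$ has length at least $\delta$ (whether $x_0$ is interior or at an endpoint). Then
\[\int_0^1 |q(x)|\, dx \ge \int_I |q(x)|\, dx \ge \frac{c}{2}\cdot \delta > 0,\]
which contradicts the hypothesis $\int |q|\, dx = 0$.

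The argument is essentially one line once continuity is invoked, so there is no real obstacle; the only minor care is ensuring $I$ has positive length when $x_0$ lies on the boundary of $[0,1]$, which is handled by noting that one of $[x_0, x_0+\delta)$ or $(x_0 - \delta, x_0]$ is contained in $[0,1]$ and has length $\delta$. I would also remark that the lemma is stated qualitatively here but, as the surrounding discussion suggests, the same proof yields the quantitative bound $\int |q|\, dx \ge \frac{\|q\|_\infty}{2}\cdot \delta$ once $\delta$ is extracted from a modulus of continuity for $q$, which will be the form needed for the constructive analysis in Section~\ref{sec:quant}.
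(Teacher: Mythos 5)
Your argument is correct and is exactly the one the paper uses for the quantitative version, Lemma \ref{cont_is_zero_quant}; the paper itself states Lemma \ref{cont_is_zero_qual} without proof as a ``seemingly trivial fact.'' One small correction to the boundary remark: the assertion that one of $[x_0,x_0+\delta)$ or $(x_0-\delta,x_0]$ lies inside $[0,1]$ with length $\delta$ requires $\delta\leq \tfrac12$, which is why the paper's quantitative proof replaces $\omega_q(\epsilon/2)$ with $\eta=\min\{\tfrac12,\omega_q(\epsilon/2)\}$ before making the estimate. For the purely qualitative statement this is harmless, since $I$ still has positive length, but it matters once you extract the explicit bound you mention at the end.
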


\begin{theorem}\label{main_qual}
  Let $f$ be a continuous function.  Then for any $n$, there is a unique best approximation to $f$ in $\mathcal{P}_n$.
\end{theorem}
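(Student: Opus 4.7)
The plan is to derive uniqueness from existence (which the Riesz theorem mentioned earlier provides) by a convexity argument that routes all the real content through the two lemmas just proved.

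First, suppose toward contradiction that $p_1,p_2 \in \mathcal{P}_n$ are two distinct best approximations to $f$, and write $E = ||f-p_1||_1 = ||f-p_2||_1$ and $\bar p = (p_1+p_2)/2$. Since $\mathcal{P}_n$ is a linear space, $\bar p \in \mathcal{P}_n$, and the triangle inequality gives $||f-\bar p||_1 \leq \tfrac12||f-p_1||_1 + \tfrac12||f-p_2||_1 = E$; minimality forces equality, so $\bar p$ is also a best approximation.

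Next, set $g_i = f - p_i$, so that $\bar g := (g_1+g_2)/2 = f - \bar p$. The identity $||\bar g||_1 = \tfrac12(||g_1||_1 + ||g_2||_1)$ rearranges to
\[\int \bigl(|g_1| + |g_2| - |g_1 + g_2|\bigr)\,dx = 0.\]
The integrand is continuous (since $f$, and hence each $g_i$, is continuous) and pointwise nonnegative by the usual triangle inequality in $\mathbb{R}$, so Lemma \ref{cont_is_zero_qual} forces $|g_1(x)+g_2(x)| = |g_1(x)| + |g_2(x)|$ for every $x \in [0,1]$. Equivalently, $g_1(x)$ and $g_2(x)$ agree in sign (where either is nonzero) at every point; in particular, whenever $g_1(x) + g_2(x) = 0$ we must have $g_1(x) = g_2(x) = 0$.

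Finally, apply Lemma \ref{best_approx_zero_qual} to the best approximation $\bar p$: the function $\bar g$ has at least $n+1$ zeros. By the previous paragraph, both $g_1$ and $g_2$ vanish at each of these $n+1$ points, so the polynomial $p_1 - p_2 = g_2 - g_1$ has at least $n+1$ zeros. Since $\deg(p_1 - p_2) \leq n$, we conclude $p_1 = p_2$, contradicting the assumption.

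The main obstacle is the middle step, passing from the integral identity to a pointwise sign condition on $g_1$ and $g_2$; everything else is bookkeeping. That step is exactly the use of Lemma \ref{cont_is_zero_qual}, and it is where continuity of $f$ enters essentially — without it the equality of $|g_1|+|g_2|$ and $|g_1+g_2|$ would hold only almost everywhere, and extra measure-theoretic work would be needed to conclude that the zeros of $\bar g$ are common zeros of $g_1$ and $g_2$.
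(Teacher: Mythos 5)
Your proof is correct and follows essentially the same route as the paper's: form the midpoint $\bar p$, observe it is also a best approximation, apply Lemma \ref{cont_is_zero_qual} to the nonnegative continuous integrand $\tfrac12|g_1|+\tfrac12|g_2|-|\bar g|$ to get pointwise equality, invoke Lemma \ref{best_approx_zero_qual} for $n+1$ zeros of $\bar g$, and conclude that $p_1-p_2$ vanishes at those points and hence identically. The only cosmetic difference is notational (you write the pointwise condition as $|g_1+g_2|=|g_1|+|g_2|$ rather than in terms of $f,p,p_1,p_2$), and your remark about existence via Riesz matches the paper's earlier discussion.
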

\begin{proof}
  Suppose $p_1,p_2$ are both best approximations of $f$ by polynomials of degree at most $n$.  Let $p$ be the average, $p(x)=\frac{1}{2}(p_1(x)+p_2(x))$.  Clearly $p$ is also a polynomial of degree at most $n$.  Also 
  \begin{align*}
    ||f-p||_1
&=\int |f(x)-p(x)|\, dx\\
&=\int |f(x)-\frac{1}{2}(p_1(x)+p_2(x))|\, dx\\
&\leq\int \frac{1}{2}|f(x)-p_1(x)|+\frac{1}{2}|f(x)-p_2(x)|\, dx\\
&=\frac{1}{2}||f-p_1||_1+\frac{1}{2}||f-p_2||_1.
 \end{align*}
This means that $p$ is also a best approximation of $f$.

Since $p_1$ and $p_2$ are both best approximations, we have $||f-p||_1=||f-p_1||_1=||f-p_2||_1$, and therefore
\begin{align}
  0&=
\frac{1}{2}||f-p_1||_1+\frac{1}{2}||f-p_2||_1-||f-p||_1\notag\\
&=\int\frac{1}{2}|f(x)-p_1(x)|+\frac{1}{2}|f(x)-p_2(x)|-|f(x)-p(x)|\, dx.\label{c2}
\end{align}

On the other hand, by the triangle inequality, for every $x$,
\begin{align*}
|f(x)-p(x)|
&=|f(x)-\frac{1}{2}(p_1(x)+p_2(x))|\\
&=|\frac{1}{2}(f(x)-p_1(x))+\frac{1}{2}(f(x)-p_2(x))|\\
&\leq\frac{1}{2}|f(x)-p_1(x)|+\frac{1}{2}|f(x)-p_2(x)|.
\end{align*}
In particular, 
\begin{equation}
  \label{c1}
  \frac{1}{2}|f(x)-p_1(x)|+\frac{1}{2}|f(x)-p_2(x)|-|f(x)-p(x)|\geq 0
\end{equation}
 for every $x$.

Combining (\ref{c2}) with (\ref{c1}) using Lemma \ref{cont_is_zero_qual}, for all $x$
\[\frac{1}{2}|f(x)-p_1(x)|+\frac{1}{2}|f(x)-p_2(x)|-|f(x)-p(x)|=0.\]
Therefore for every $x$,
\[|f(x)-p(x)|=\frac{1}{2}|f(x)-p_1(x)|+\frac{1}{2}|f(x)-p_2(x)|.\]

Since $p$ is a best approximation to $f$, by Lemma \ref{best_approx_zero_qual}, $f-p$ must have at least $n+1$ zeros.

Suppose $r$ is a zero of $f-p$.  Then
\[0=|f(r)-p(r)|=\frac{1}{2}|f(r)-p_1(r)|+\frac{1}{2}|f(r)-p_2(r)|\]
and therefore
\[\frac{1}{2}|f(r)-p_1(r)|=\frac{1}{2}|f(r)-p_2(r)|=0.\]
Then $p_1(r)=f(r)=p_2(r)$, and therefore $p_1(r)-p_2(r)=0$.  $p_1-p_2$ is a polynomial of degree at most $n$ which is $0$ at every zero of $f-p$.  The only polynomial of degree at most $n$ which has $n+1$ zeros is $0$, so $p_1-p_2$ must be constantly $0$, and therefore $p_1=p_2$.
\end{proof}

\section{Rates of Unicity}\label{sec:quant}

Not all uniqueness results are created equal.  Once we know there is a unique approximation, we can ask for more detailed quantitative information about the approximation. 
\begin{definition}
$p\in\mathcal{P}_n$ is a \emph{$\epsilon$-nearly best approximation to $f$ in $\mathcal{P}_n$} if for every $q\in\mathcal{P}_n$,
\[||f-p||_1< ||f-q||_1+\epsilon.\]
\end{definition}
We know that two best approximations in $\mathcal{P}_n$ must be equal, so the analogous question to ask is whether $\epsilon$-nearly best approximations must be near each other.  More precisely, we look for a function $\Phi_f$ such that if $p_1$ and $p_2$ are $\Phi_f(\delta)$-nearly best approximations then $||p_1-p_2||_1<\delta$.  Such a function $\Phi_f$ is known as a \emph{modulus of uniqueness} for $f$.

It's clear from the definition that if there is any modulus of uniqueness then there are many functions meeting the definition (for instance, if $\Phi_f$ is a modulus of uniqueness for $f$ and $\Psi(\delta)\geq\Phi_f(\delta)$ for every $\delta$ then $\Psi$ is also a modulus of uniqueness for $f$).  However if there is a unique best approximation to $f$, we would expect that there is a ``nice'' modulus of uniqueness---one that is continuous, at least from one side, and where $\lim_{\delta\rightarrow 0}\Phi_f(\delta)=0$.  The optimal modulus has these properties under suitable conditions, and is sometimes called "the" modulus of uniqueness.  An arbitrary modulus of uniqueness, however, need not be so nice.  The particular case where $c\delta$ is a modulus of uniqueness for some constant $c$ is called \emph{strong unicity}, and is well-studied in approximation theory (again, see \cite{MR1656150}).

A second type of quantitative information is the stability of the mean approximation under small changes to the function.  Let us write $\mathcal{A}$ for the functional mapping $f$ to its best approximation in $\mathcal{P}$.  When $||f-f'||_1\leq\epsilon$, what can we say about $||\mathcal{A}(f)-\mathcal{A}(f')||_1$?  A modulus of uniqueness immediately answers this question.

\begin{theorem}
Suppose $f,f'$ have unique mean approximations in $\mathcal{P}$, $\Phi_f$ is the modulus of uniqueness for $f$, and $||f-f'||_1<\frac{1}{2}\Phi_f(\delta)$.  Then $||\mathcal{A}(f)-\mathcal{A}(f')||_1<\delta$.
\end{theorem}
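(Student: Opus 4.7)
The plan is to reduce the conclusion to an application of the modulus of uniqueness. Let $p = \mathcal{A}(f)$ and $p' = \mathcal{A}(f')$. Trivially $p$ is a $\Phi_f(\delta)$-nearly best approximation to $f$ (it is actually best). If I can also show that $p'$ is a $\Phi_f(\delta)$-nearly best approximation to $f$, then the defining property of $\Phi_f$ gives $\|p - p'\|_1 < \delta$, which is exactly what we want.

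To verify that $p'$ is a $\Phi_f(\delta)$-nearly best approximation to $f$, fix any $q\in\mathcal{P}$ and bound $\|f - p'\|_1$ by two applications of the triangle inequality together with the fact that $p'$ is best for $f'$:
\begin{align*}
\|f - p'\|_1 &\leq \|f - f'\|_1 + \|f' - p'\|_1 \\
&\leq \|f - f'\|_1 + \|f' - q\|_1 \\
&\leq 2\|f - f'\|_1 + \|f - q\|_1 \\
&< \Phi_f(\delta) + \|f - q\|_1,
\end{align*}
where the last step uses the hypothesis $\|f - f'\|_1 < \tfrac12\Phi_f(\delta)$. Since $q$ was arbitrary, $p'$ meets the definition of a $\Phi_f(\delta)$-nearly best approximation to $f$.

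Applying the modulus of uniqueness to the pair $p$ and $p'$ then yields $\|\mathcal{A}(f) - \mathcal{A}(f')\|_1 = \|p - p'\|_1 < \delta$. There is no real obstacle here; the content is the observation that $\mathcal{A}(f')$ is nearly best for $f$ with error controlled by twice the $L_1$-distance between $f$ and $f'$, which is precisely why the factor of $\frac12$ appears in the hypothesis.
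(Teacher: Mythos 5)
Your proof is correct and follows essentially the same route as the paper: show $\mathcal{A}(f')$ is a $\Phi_f(\delta)$-nearly best approximation to $f$ via the same two triangle-inequality steps, then invoke the modulus of uniqueness. The only difference is that you make explicit the (trivial) fact that $\mathcal{A}(f)$ is itself nearly best, which the paper leaves implicit.
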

\begin{proof}
  Suppose $||f-f'||_1<\frac{1}{2}\Phi_f(\delta)$.  
For any $q\in\mathcal{P}$,
  \begin{align*}
    ||f-\mathcal{A}(f')||_1
&\leq ||f-f'||_1+||f'-\mathcal{A}(f')||_1\\
&\leq ||f-f'||_1+||f'-q||_1\\
&\leq 2||f-f'||_1+||f-q||_1\\
&< ||f-q||_1+\Phi_f(\delta).
  \end{align*}
Therefore $\mathcal{A}(f')$ is a $\Phi_f(\delta)$-nearly best approximation to $f$, and since $\Phi_f$ is a modulus of uniqueness, 
\[||\mathcal{A}(f)-\mathcal{A}(f')||_1<\delta.\]
\end{proof}

Our goal in the remainder of this section is to find a modulus of uniqueness corresponding to Jackson's Theorem, closely following \cite{MR1966746}.  More precisely, we want to find a modulus corresponding to the \emph{particular proof} given in the previous section---we think of this as the process of making our arguments more quantitative: instead of working with best approximations, we'll work with nearly best approximations; instead of working with non-zero values, we'll include numerical bounds on how large the values must be; and so on.

We begin with a simple (but, as it turns out, essential) observation: if $||p||_1> 2||f||_1$ then $||f-0||_1<||f-p||_1$, and therefore when considering approximations, we can consider only those polynomials which satisfy $||p||_1\leq 2||f||_1$.

\begin{definition}
  $\mathcal{Q}_n\subseteq\mathcal{P}_n$ is the set of polynomials $p$ of degree $\leq n$ such that $||p||_1\leq 2||f||_1$.
\end{definition}

In particular, the best approximation to $f$ must belong to $\mathcal{Q}_n$.  From here on we will restrict our attention to $\mathcal{Q}_n$.

\subsection{A Quantitative Lemma \ref{non_zero_red_qual}}

We can expect that in order to obtain a quantitative version of the main theorem, we'll need quantitative versions of the intermediate lemmas as well.  We'll start, naturally, with Lemma \ref{non_zero_red_qual}; we need to strengthen the conclusion so that $||g-\lambda h||_1+\delta<||g||_1$ for some fixed value $\delta$.

Of course, the stronger conclusion isn't true unless we make stronger assumptions as well---in order to get a quantitative conclusion, we'll need quantitative assumptions.  We look through the proof to find out which steps contribute to the size of the gap between $||g||_1$ and $||g-\lambda h||_1$, and then take steps to bound those values.

The actual guarantee that the gap exists is given by (\ref{b1}), where we can see that the size of the gap is $\lambda(\left|\int_Ah\sgn g\,dx\right|-\int_B|h|\, dx)$.  This gives us two values we need to bound away from $0$: $\lambda$ and $\left|\int_Ah\sgn g\,dx\right|-\int_B|h|\, dx$.  The latter is shown to be positive at (\ref{b2}); from this and (\ref{b3}), we can see that there are two factors contributing to the size of $\left|\int_Ah\sgn g\,dx\right|-\int_B|h|\, dx$: the size of $\left|\int h\sgn g\, dx\right|$ and the size of $\int_B|h(x)|\, dx$.  The first of these is simply one of our assumptions: instead of making the qualitative assumption that $\int h\sgn g\,dx\neq 0$, we will make a quantitative assumption that $\left|\int h\sgn g\, dx\right|\geq\theta$ for some appropriate $\theta$.

We turn to the bound on $\int_B|h(x)|\, dx$.  The bound on this value is given by (\ref{b4}), where it is established that
\[\int_B|h(x)|\, dx\leq ||h||_\infty\cdot k\cdot 2\epsilon.\]
A bound on $||h||_\infty$ is just an additional assumption, as is a particular value for $k$, but $\epsilon$ is some value we have chosen.  As long as we are willing to make $\epsilon$ small, we can make $\int_B|h(x)|\,dx$ as small as we want.  So as we make $\epsilon$ smaller, we make $\int_B|h(x)|\,dx$ smaller, and therefore seem to make $||g||_1-||g-\lambda h||_1$ larger.  However when we turn to $\lambda$, we will see that making $\epsilon$ small may force $\lambda$ to be small, which makes $||g||_1-||g-\lambda h||_1$ small.

$\lambda$ was given by the rule that $\lambda ||h||_\infty<m=\inf\{|g(x)|\mid x\in A\}$, so to keep $\lambda$ bounded away from $0$, we need to bound $m$ away from $0$.  But here we run into a problem: what if $g$ has a point which dips very close to $0$, but isn't near any of the zeros?  Then this point is included in the set $A$, but it forces $m$ to be very small, which in turn forces $\lambda$ to be small, and therefore causes $||g-\lambda h||_1$ to be very close to $||g||_1$.

Suppose $g_\zeta$ is a function which ``almost'' has a zero---say, $g_\zeta(x)=(x-1/2)^2+\zeta$ where $\zeta$ is very small.  If we take the proof given in the previous section at face value, $\zeta$ being small just forces us to choose $\lambda_\zeta$ very small.  Indeed, following this argument, as $\zeta$ approaches $0$, the gap $||g_\zeta||_1-||g_\zeta-\lambda_\zeta h||_1$ approaches $0$ as well.

When $\zeta$ reaches $0$, however, the situation changes.  Now $1/2$ is a zero of $g_0(x)$, and $\lambda$ only has to be concerned with the value of $g_0(x)$ when $x$ is outside the interval $(1/2-\epsilon,1/2+\epsilon)$.  In particular, we can still choose a value $\lambda_0$ so that $||g_0||_1-||g_0-\lambda_0 h||_1>0$.  It turns out to be a general principle that quantitative results ought not exhibit such discontinuities---all parameters in our proof should vary continuously in the function $g$.  It is therefore not surprising that we can easily modify our proof to eliminate this discontinuity: when $\zeta$ is very small, we could make $||g_\zeta||_1-||g_\zeta-\lambda_\zeta h||_1$ larger by treating $1/2$ as if it were a zero and removing the interval around it.  This has a price (it makes $k$ larger, because we have a new zero, which in turn makes $\int_B|h(x)|\,dx$ larger, and therefore $\left|\int_Ah\sgn g\,dx\right|-\int_B|h|\,dx$ smaller), but when $\zeta$ is very small, we still obtain better bounds this way.

Instead of assuming that $g$ has finitely many zeros, we'll assume there are finitely many points where the value is below some parameter $\zeta$ (with the number of such points and the size of $\zeta$ contributing to our ultimate bound on $||g||_1-||g-\lambda h||_1$).  Stated like this, we still don't have the formulation quite right, since when $g$ is continuous, having any zeros at all will lead to having uncountably many points below $\zeta$; clearly having many small points ``close togther'' doesn't count.  We will remove a small interval around each almost-zero, so what matters---what forces the set $B$ to be large---is if there are many points close to $0$ which are separated by large gaps.  It is more convenient to jump to our ultimate purpose, and simply require that there is a set $B$ of measure at most $\epsilon$ containing all points $r$ with $|g(r)|<\zeta$.

We can now prove this quantitative formulation using essentially the same argument as in the proof of Lemma \ref{non_zero_red_qual}, but filling in explicit calculations where appropriate.
\begin{lemma}\label{non_zero_red_quant}
Let $g,h,\epsilon,\zeta$ be given such that:
  \begin{itemize}
  \item  $g$ and $h$ are $L_1$-functions,
  \item $\epsilon>0$ and $\zeta>0$,
  \item $|\int h\sgn g\,dx|\geq 3K\epsilon$,
  \item $||h||_\infty\leq K$,
  \item There is a set $B$ of measure at most $\epsilon$ such that if $|g(x)|<\zeta$ then $x\in B$.
  \end{itemize}
Then 
\[||g-\frac{\zeta}{2K} h||_1+\epsilon\frac{\zeta}{2}\leq ||g||_1.\]
\end{lemma}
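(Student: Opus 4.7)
The plan is to follow the proof of Lemma \ref{non_zero_red_qual} step by step, replacing each ``choose small enough'' move with the explicit numerical choice forced by the hypotheses. The hypotheses have been designed so that everything just lines up; the only real work is checking that the inequalities in display \eqref{b1} remain valid with $\lambda$ fixed at $\zeta/(2K)$.

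First I would set $A = [0,1]\setminus B$, so that $|g(x)|\geq \zeta$ for all $x\in A$, and choose $\lambda = \pm\zeta/(2K)$ with $\sgn\lambda = \sgn\bigl(\int_A h\sgn g\,dx\bigr)$. The point of these choices is twofold. On the one hand, $|\lambda h(x)|\leq |\lambda|\cdot\|h\|_\infty \leq \zeta/2 < |g(x)|$ on $A$, so $\sgn g = \sgn(g-\lambda h)$ on $A$, exactly as in the qualitative proof. On the other hand, $\lambda$ is now a fixed number, not something shrinking toward $0$, and that is what will produce the uniform gap $\epsilon\zeta/2$.

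Next I would reproduce the two bookkeeping estimates. The bound $\|h\|_\infty\leq K$ together with $\mathrm{meas}(B)\leq\epsilon$ gives $\int_B|h|\,dx\leq K\epsilon$, which is the quantitative replacement for \eqref{b4}. Combined with the quantitative replacement for \eqref{b3}, namely
\[\Bigl|\int_A h\sgn g\,dx\Bigr| \geq \Bigl|\int h\sgn g\,dx\Bigr| - \int_B|h|\,dx \geq 3K\epsilon - K\epsilon = 2K\epsilon,\]
this yields $\bigl|\int_A h\sgn g\,dx\bigr| - \int_B|h|\,dx \geq K\epsilon$, the quantitative form of \eqref{b2}.

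Finally I would run the chain of (in)equalities from the original proof culminating in \eqref{b1}. Since all the steps there are pointwise algebraic identities or applications of the triangle inequality, nothing changes; the computation ends with
\[\|g-\lambda h\|_1 \leq \|g\|_1 - |\lambda|\Bigl(\Bigl|\int_A h\sgn g\,dx\Bigr| - \int_B|h|\,dx\Bigr) \leq \|g\|_1 - \frac{\zeta}{2K}\cdot K\epsilon = \|g\|_1 - \frac{\epsilon\zeta}{2},\]
which rearranges to the desired inequality. There is no real obstacle: the only place one must be careful is in verifying $\sgn(g-\lambda h)=\sgn g$ on $A$, since this is what licenses the step that replaces $|g-\lambda h|$ by $(g-\lambda h)\sgn g$ inside the integral over $A$; that step is what turns an absolute value into a signed quantity and is the engine of the whole computation.
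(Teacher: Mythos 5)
Your proof follows the paper's own argument almost line for line: set $A=[0,1]\setminus B$, read off $\int_B|h|\,dx\leq K\epsilon$ from $\|h\|_\infty\leq K$ and $\mathrm{meas}(B)\leq\epsilon$, deduce $\bigl|\int_A h\sgn g\,dx\bigr|-\int_B|h|\,dx\geq K\epsilon$, pick $\lambda$ with $|\lambda|=\zeta/(2K)$ so that $|\lambda h|<\zeta\leq|g|$ on $A$ forces $\sgn(g-\lambda h)=\sgn g$ there, and then push these numbers through the chain of (in)equalities from the proof of Lemma \ref{non_zero_red_qual}.

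The one place where you are slightly more careful than the paper is the sign of $\lambda$. The paper simply sets $\lambda=\zeta/(2K)$, but the passage from $-\lambda\int_A h\sgn g\,dx$ to $-|\lambda|\bigl|\int_A h\sgn g\,dx\bigr|$ in the derivation of \eqref{b1} genuinely requires $\sgn\lambda=\sgn\bigl(\int_A h\sgn g\,dx\bigr)$, exactly as in the qualitative Lemma \ref{non_zero_red_qual}. You make this choice explicit. Strictly speaking this means what one actually proves is the existence of a $\lambda$ with $|\lambda|=\zeta/(2K)$ satisfying the displayed bound rather than the bound for the specific value $+\zeta/(2K)$, but that existential form is precisely how the lemma is invoked in the proof of Lemma \ref{best_approx_zero_quant}, so nothing downstream is affected. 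Your attention to this point is a small improvement in rigor, not a deviation in approach.
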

\begin{proof}
  Let $A=[0,1]\setminus B$.  We have
\[\int_B|h|\,dx\leq K\epsilon.\]
We have
\[\left|\int_A h\sgn g\,dx\right|\geq \left|\int h\sgn g\,dx\right|-\int_B|h|\, dx\geq 2K\epsilon\]
and therefore
\[\left|\int_Ah\sgn g\,dx\right|-\int_B|h|\,dx\geq K\epsilon.\]

Note that if $x\in A$ then $|g(x)|\geq\zeta$.  Let $\lambda=\frac{\zeta}{2K}$.  Then for any $x\in A$, $|\lambda h(x)|\leq \frac{\zeta}{2}<\zeta\leq|g(x)|$.  In particular, $\sgn g(x)=\sgn(g(x)-\lambda h(x))$ for every $x\in A$.  Then, by the same calculations as in the proof of Lemma \ref{non_zero_red_qual}, 
\[||g-\lambda h||_1\leq ||g||_1-\lambda\left(\left|\int_Ah\sgn g\,dx\right|-\int_B|h(x)|\,dx\right)\leq ||g||_1-\epsilon\frac{\zeta}{2}.\]
\end{proof}

\subsection{A Quantitative Lemma \ref{best_approx_zero_qual}}
We now turn to formulating a quantitative version of Lemma \ref{best_approx_zero_qual}.  We would expect to need a quantitative version of Lemma \ref{cont_is_zero_qual} to derive (\ref{non_zero}).  In fact, a particularly nice version which suffices for our purposes is available in the existing literature:
\begin{theorem}[Markov brothers' Inequality\footnote{This inequality is sometimes called just ``Markov's inequality'', but as that name is also used for an inequality from probability theory, this name is perhaps less confusing.  The inequality given here was proven by Andrey Markov, the same one who gave his name to the other Markov's inequality.  His younger brother Vladimir, also a mathematician, proved a generalization to multiple derivatives, leading to the name given here, which, properly, applies only to the generalization.}]    If $p$ has degree $\leq n$, $||p'||_\infty\leq 2n^2||p||_\infty$.
\end{theorem}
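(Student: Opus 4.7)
The plan is to reduce to the classical Markov inequality on $[-1,1]$ and then prove that using Chebyshev polynomials as the extremal objects. First, the affine change of variable $q(t) = p((t+1)/2)$ converts a polynomial on $[0,1]$ to one on $[-1,1]$ of the same degree, with identical sup-norms but $\|q'\|_{\infty} = \tfrac{1}{2}\|p'\|_{\infty}$. So the target $\|p'\|_\infty \leq 2n^2\|p\|_\infty$ on $[0,1]$ is equivalent to the classical form $\|q'\|_\infty \leq n^2\|q\|_\infty$ on $[-1,1]$; the factor $2$ in the statement is simply this change of scale.

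I would then introduce the Chebyshev polynomials $T_n$, defined by $T_n(\cos\theta) = \cos(n\theta)$; one verifies that $T_n$ is indeed a polynomial of degree $n$ with $\|T_n\|_\infty = 1$ on $[-1,1]$, that it equioscillates at the $n+1$ points $\eta_k = \cos(k\pi/n)$ taking alternating values $\pm 1$, and that a direct computation (via L'H\^opital on $n\sin(n\theta)/\sin\theta$) gives $T_n'(\pm 1) = \pm n^2$. These facts identify the extremal case and guide the bound in general: we expect $T_n$ itself to saturate the inequality.

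The core is then to show that any polynomial $q$ of degree $\leq n$ with $\|q\|_\infty \leq 1$ satisfies $|q'(x)| \leq n^2$ everywhere on $[-1,1]$. I would split into two regions. In the interior, where $\sqrt{1-x^2} \geq 1/n$, the Bernstein inequality $|q'(x)| \leq n/\sqrt{1-x^2}$ immediately yields $|q'(x)| \leq n^2$; Bernstein's inequality itself is proved by substituting $x = \cos\theta$ and applying the standard trigonometric estimate $|S'(\theta)| \leq n\|S\|_\infty$ arising from Riesz's interpolation formula. Near the endpoints, where the Bernstein bound degenerates, one argues by contradiction: if $|q'(x_0)| > n^2$ for some $x_0$ near $\pm 1$, an auxiliary polynomial of the form $\alpha T_n - \beta q$, with $\alpha,\beta$ tuned to match value and derivative at $x_0$, combined with the sign pattern of $T_n$ at the $\eta_k$, forces more sign changes, and hence more zeros, than a polynomial of degree $\leq n$ can admit.

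The main obstacle is this endpoint case. The interior Bernstein bound is essentially a one-line consequence of the trigonometric-polynomial estimate, but pushing the inequality all the way to $\pm 1$ (where $T_n$ itself already attains slope $n^2$) requires the full extremal structure of $T_n$ and a careful count of sign changes against its equioscillation points. This endpoint step is precisely where the $n^2$ factor (as opposed to a merely linear-in-$n$ factor) materializes, and constitutes the technical heart of the proof.
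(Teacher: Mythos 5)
The paper does not prove this statement: it cites the Markov brothers' inequality as a classical off-the-shelf result and moves straight on to Lemma~\ref{markov2}. So there is no paper proof to compare against; the footnote is the only commentary offered. Your proposal is the standard textbook route, and it is essentially correct. The affine rescaling $[0,1]\to[-1,1]$ does account for the factor $2$: if $q(t)=p((t+1)/2)$ then $\|q\|_\infty=\|p\|_\infty$ and $\|q'\|_\infty=\tfrac12\|p'\|_\infty$, so the classical $\|q'\|_\infty\le n^2\|q\|_\infty$ on $[-1,1]$ gives exactly $\|p'\|_\infty\le 2n^2\|p\|_\infty$ on $[0,1]$. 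Bernstein handles $\sqrt{1-x^2}\ge 1/n$, and since $\sqrt{1-1/n^2}>\cos(\pi/n)$ that region already contains all the equioscillation points of $T_n$ except $\pm1$, so the leftover endpoint region is exactly where the Chebyshev extremal argument is needed. One phrasing in the endpoint step is off: a combination $\alpha T_n-\beta q$ has only one essential degree of freedom after rescaling, so you cannot in general ``tune $\alpha,\beta$ to match value and derivative'' at $x_0$. The classical normalization matches only the derivative: set $\lambda=q'(x_0)/T_n'(x_0)$ and $r=\lambda T_n-q$, so $r'(x_0)=0$; if $|q'(x_0)|>n^2\ge T_n'(x_0)$ then $\lambda>1$, hence $r(\eta_k)$ has the same sign as $(-1)^k$, forcing $n$ sign changes of $r$ and, via Rolle plus the extra critical point at $x_0$, too many zeros of a polynomial of degree $\le n-1$. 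Making that zero count airtight (ruling out $x_0$ coinciding with a Rolle zero, or switching to the Lagrange-interpolation-at-Chebyshev-nodes version where all coefficients have one sign) is indeed the technical heart, as you correctly flag.
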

An immediate consequence of this is
\begin{lemma}\label{markov2}
      If $p$ has degree $\leq n$, $||p||_\infty\leq 2(n+1)^2||p||_1$.
\end{lemma}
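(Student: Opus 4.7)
The plan is to apply the Markov brothers' inequality not to $p$ itself, but to its antiderivative. Define $P(x) := \int_0^x p(t)\,dt$. Since $p$ has degree at most $n$, the polynomial $P$ has degree at most $n+1$, and $P' = p$. Applying the Markov brothers' inequality to $P$ (with degree bound $n+1$) yields
\[
||p||_\infty \;=\; ||P'||_\infty \;\leq\; 2(n+1)^2 \,||P||_\infty.
\]

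It remains to bound $||P||_\infty$ in terms of $||p||_1$, which is immediate: for any $x \in [0,1]$,
\[
|P(x)| \;=\; \left|\int_0^x p(t)\,dt\right| \;\leq\; \int_0^x |p(t)|\,dt \;\leq\; \int_0^1 |p(t)|\,dt \;=\; ||p||_1.
\]
Combining the two displayed inequalities gives the conclusion of the lemma.

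The only conceptual step is the decision to apply Markov to the antiderivative rather than to $p$ directly; once that move is made, both the constant $(n+1)^2$ and the factor of $2$ fall out of the cited inequality applied to a degree-$(n+1)$ polynomial, and no optimization of an interval length or case analysis at the boundary of $[0,1]$ is needed. The bound $||P||_\infty \leq ||p||_1$ is the only other ingredient, and it is a direct consequence of the triangle inequality for integrals. There is no real obstacle here, which is presumably why the author flags this as ``an immediate consequence'' of the Markov brothers' inequality.
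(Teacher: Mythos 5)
Your proof is correct and is essentially identical to the paper's: both apply the Markov brothers' inequality to the antiderivative $\int_0^x p\,dt$ (a degree-$(n+1)$ polynomial) and then bound its sup norm by $||p||_1$ using the triangle inequality for integrals.
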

\begin{proof}
  Let $q(x)=\int_0^x p(y)\,dy$ be the integral of $p$.  Applying Markov's inequality to $q$ gives 
  \begin{align*}
||p||_\infty
&\leq 2(n+1)^2||q||_\infty\\
&=2(n+1)^2\sup_x \left|\int_0^x p(y)\,dy\right|\\
&\leq 2(n+1)^2\sup_x \int_0^x |p(y)|\,dy\\
&= 2(n+1)^2\int_0^1 |p(y)|\,dy\\
&= 2(n+1)^2||p||_1.\\
 \end{align*}
\end{proof}

As for stating a quantitative version of Lemma \ref{best_approx_zero_qual}, our work in the previous subsection gives us a good guess what will need to happen: we will need to replace the assumption that $p$ is a best approximation with the assumption that $p$ is a nearly best approximation, and the conclusion that $f-p$ has $n$ zeros with the conclusion that $f-p$ has a collection of $n$ ``almost'' zeros which are well spread out.

\begin{lemma}\label{best_approx_zero_quant}
  Suppose $f$ is continuous and $p$ is a $\frac{\zeta}{20(n+2)^2}$-nearly best approximation to $f$ in $\mathcal{Q}_n$.  Then there are $n+1$ points $r_1<\cdots<r_{n+1}$ such that $r_{i+1}-r_i\geq \frac{1}{20(n+2)^2n}$ for $i<n+1$ and $|f(r_i)-p(r_i)|\leq \zeta$ for each $i\leq n+1$.
\end{lemma}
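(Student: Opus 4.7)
\emph{Proof proposal.} The plan is to contrapose, using Lemma \ref{non_zero_red_quant}: if no such $n+1$ points exist, I will build a polynomial $\lambda h$ of degree $\leq n$ whose addition to $p$ improves the $L_1$ distance by more than $\frac{\zeta}{20(n+2)^2}$, contradicting $p$'s near-optimality. Throughout, write $g=f-p$ and $\epsilon_0=\frac{1}{20(n+2)^2 n}$.

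First, greedily pick a chain $s_1<s_2<\cdots<s_m$ in $[0,1]$ with $|g(s_i)|\leq \zeta$ and consecutive spacings $\geq \epsilon_0$; the failure of the conclusion forces $m\leq n$. A direct check of greedy maximality shows $\{x:|g(x)|\leq \zeta\}\subseteq B:=\bigcup_{i=1}^m [s_i,s_i+\epsilon_0]$, so $|B|\leq n\epsilon_0=\frac{1}{20(n+2)^2}$. On the complement $A=[0,1]\setminus B$ we have $|g|>\zeta$, so $g$ has constant sign $\sigma_j\in\{\pm 1\}$ on each of the (at most $m+1$) components $J_j$ of $A$.

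Next I construct $h$. For each index $j$ at which $\sigma_{j-1}\neq \sigma_j$, the intermediate value theorem supplies a point $a_j\in [s_j,s_j+\epsilon_0]$ with $g(a_j)=0$; there are at most $m\leq n$ such $a_j$'s. Set $h(x)=c\prod_j(x-a_j)$ and choose $c\in\{\pm 1\}$ so that the sign of $h$ on any one $J_j$ matches $\sigma_j$. Since $h$ flips sign precisely as we cross each $a_j$, its sign then agrees with $\sigma_j$ on every $J_j$, giving $h\,\sgn g=|h|\geq 0$ on $A$. Moreover $h$ has degree at most $n$ and $||h||_\infty\leq 1$.

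To apply Lemma \ref{non_zero_red_quant} with $K=1$ and $\epsilon=\frac{1}{10(n+2)^2}$, the key hypothesis is $|\int h\,\sgn g\,dx|\geq 3K\epsilon$. I bound $\int h\,\sgn g\,dx\geq \int_A|h|\,dx-K|B|\geq ||h||_1-2K|B|$, use Lemma \ref{markov2} for $||h||_1\geq \frac{1}{2(n+1)^2}$, and check numerically (using $5(n+2)^2\geq 4(n+1)^2$) that the result exceeds $\frac{3}{10(n+2)^2}$. The lemma then yields $||g-\lambda h||_1\leq ||g||_1-\frac{\zeta}{20(n+2)^2}$, so $p+\lambda h\in\mathcal{P}_n$ beats $p$ by the required amount. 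If $p+\lambda h\notin \mathcal{Q}_n$, then $||p+\lambda h||_1>2||f||_1$ forces $||f-(p+\lambda h)||_1>||f||_1$, so one compares $p$ with $0\in\mathcal{Q}_n$ instead to reach the same contradiction. The main obstacle is the choice of $h$: the tempting $\prod(x-s_i)$ has the wrong sign pattern (since $\sigma_j$ need not alternate across consecutive $J_j$), while using \emph{all} actual sign-changes of $g$ could produce a polynomial of degree $>n$; the fix is to record only the effective sign flips of $g$ between components of $A$, of which there are at most $m\leq n$.
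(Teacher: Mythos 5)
Your overall route is the paper's: assume the points do not exist, cover the small-value set by a set $B$ of measure at most $\epsilon=\frac{1}{10(n+2)^2}$, build a polynomial $h$ of degree at most $n$ whose sign agrees with $\sgn g$ off $B$, verify $|\int h\sgn g\,dx|\geq 3K\epsilon$ with the help of Lemma \ref{markov2}, and invoke Lemma \ref{non_zero_red_quant} to contradict $\frac{\zeta}{20(n+2)^2}$-near-optimality; your IVT-based choice of roots and the comparison with $0\in\mathcal{Q}_n$ when $p+\lambda h$ leaves $\mathcal{Q}_n$ are nice touches (the paper is vaguer on both). But there is a genuine gap at the one quantitative step that matters: you set $K=1$ (since $||h||_\infty\leq 1$) and then claim $||h||_1\geq\frac{1}{2(n+1)^2}$ ``by Lemma \ref{markov2}''. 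That lemma only gives $||h||_1\geq\frac{||h||_\infty}{2(n+1)^2}$, and for your monic-up-to-sign $h(x)=\pm\prod_j(x-a_j)$ with roots in $[0,1]$ the sup norm can be exponentially small: nothing in the construction prevents the $a_j$ from clustering (for instance when $g$ crosses zero several times inside a window of width about $n\epsilon_0$), in which case $||h||_\infty$ is roughly $2^{-\deg h}$, so $\int h\sgn g\,dx\leq ||h||_1\leq ||h||_\infty$ falls far below the absolute threshold $3\epsilon=\frac{3}{10(n+2)^2}$ that Lemma \ref{non_zero_red_quant} with $K=1$ requires. As written, the key hypothesis of that lemma is not established and can actually fail.

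The repair is exactly the paper's normalization: take $K=||h||_\infty$ rather than $K=1$. Then Lemma \ref{markov2} gives $||h||_1\geq\frac{K}{2(n+1)^2}$, your estimate $\int h\sgn g\,dx\geq ||h||_1-2K|B|$ scales linearly in $K$, and the same arithmetic (your inequality $5(n+2)^2\geq 4(n+1)^2$) yields $|\int h\sgn g\,dx|\geq\frac{3K}{10(n+2)^2}\geq 3K\epsilon$. Since the gain $\epsilon\frac{\zeta}{2}=\frac{\zeta}{20(n+2)^2}$ in the conclusion of Lemma \ref{non_zero_red_quant} is independent of $K$ (only $\lambda=\frac{\zeta}{2K}$ depends on it), the remainder of your argument, including the degree count and the $\mathcal{Q}_n$ fix, then goes through unchanged.
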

\begin{proof}
Let $\theta=\frac{1}{20(n+2)^2n}$, $\epsilon=\frac{1}{10(n+2)^2}$, and $g(x)=f(x)-p(x)$.  Suppose the conclusion is false; then there must be some $k\leq n$ and some $r'_1<\cdots<r'_{k}$ such that $|g(r'_{i})|\leq\zeta$ for each $i\leq k$, but whenever $|g(r)|\leq\zeta$ for any $r\in[0,1]$, there is some $i\leq k$ such that $|r-r'_i|< \theta$.  In particular, the set $B=\bigcup_{i\leq k}(r'_i-\theta,r'_i+\theta)$ contains all points $r$ with $|g(r)|\leq\zeta$.  Observe that $\mu(B)\leq 2n\theta=\frac{1}{10(n+2)^2}=\epsilon$.

If $r'_i+\theta<r'_{i+1}-\theta$ then, since $g(x)$ is continuous, $g$ does not change sign on the interval $[r'_i+\theta,r'_{i+1}-\theta]$.  So all sign changes take place in some interval $(r'_i-\theta,r'_i+\theta)$.  We choose a subsequence $r_1,\ldots,r_m$ and consider the polynomial
\[h'(x)=\prod_{i=1}^m(x-r_i).\]
By choosing the subsequence $r_i$ appropriately, we can ensure that $h(x)=(-1)^bh'(x)$ has the same sign as $g(x)$ whenever $x\not\in B$.  (Saying this precisely is tricky: roughly, we want to include $r'_i$ in our list if $\sgn g(r'_i-\theta)\neq \sgn g(r'_i+\theta)$, but the situation is complicated by the fact that we could have $r'_{i+1}-\theta<r'_{i+1}+\theta)$ and have the sign change occur somewhere in that interval; in that case we would want to include either of $r'_i$ and $r'_{i+1}$, but not both.)  Let $K=||h||_\infty$; by Lemma \ref{markov2}, $||h||_1\geq \frac{K}{2(n+1)^2}$.  

On the other hand, we have
\[\left|\int_B h\sgn g\,dx\right|\leq\int_B|h(x)|\,dx\leq K\mu(B)=K\epsilon,\]
and, taking $A=[0,1]\setminus B$,
\[||h||_1=\int|h(x)|\,dx=\int_A|h(x)|\,dx+\int_B|h(x)|\,dx\]
Putting these together gives
\[\int_A|h(x)|\,dx\geq \frac{K}{2(n+1)^2}-\frac{K}{10(n+1)^2}=\frac{2K}{5(n+1)^2}.\]
  Therefore
\begin{align*}
\left|\int h\sgn g\,dx\right|
&\geq\left|\int_A h\sgn g\,dx\right|-\int_B|h(x)|\,dx\\
&=\int_A|h(x)|\,dx-\int_B|h(x)|\,dx\\
&\geq \frac{2K}{5(n+1)^2}-\frac{K}{10(n+1)^2}\\
&=\frac{3K}{10(n+1)^2}\\
&\geq 3K\epsilon.
\end{align*}

Therefore we may apply Lemma \ref{non_zero_red_quant} and conclude that there is a $\lambda$ so that $||g-\lambda h||_1+\epsilon\frac{\zeta}{2}\leq||g||_1$, contradicting the assumption that $p$ was a $\frac{\zeta}{20(n+1)^2}$-nearly best approximation.
\end{proof}

\subsection{A Quantitative Lemma \ref{cont_is_zero_qual}}

We now turn to Lemma \ref{cont_is_zero_qual}.  We replaced one use of this lemma with the Markov brothers' inequality, but there is a second use, in the proof of the main theorem, which cannot be replaced by the Markov brothers' inequality (since that only applies to polynomials).  In its qualitative form, this lemma seems like an obvious fact about integrals, but we now need a quantitative version.  A quantitative formulation should weaken the assumption to merely $\int |q(x)|\, dx\leq\epsilon$ for some bound $\epsilon$.  Of course, we can no longer hope to have $q(x)=0$ for every $x$, since we can easily think of continuous functions which have small but non-zero integrals but are not always $0$.  Therefore we expect to weaken the conclusion as well---instead of having $||q||_\infty=0$, we might hope to show that $||q||_\infty<\delta$ for some value $\delta$.

We can easily imagine continuous functions $q$ with $||q||_1$ small but $||q||_\infty$ quite large: $q$ could be $0$ except on a small ``bump'' where it gets very large.  By making the bump very narrow, we can let the bump be very tall, causing $||q||_\infty$ to be large even though $||q||_1$ is staying small.  These bumps don't create discontinuity, but it's not unreasonable to say that a function with a tall, thin bump is ``less'' continuous than functions without such features.  We can quantify this effect:
\begin{definition}
  Let $q$ be a continuous $L_1$ function.  A \emph{modulus of uniform continuity} of $q$ is a function $\omega_q(\epsilon)$ such that for any $\epsilon>0$ and any $x,y\in[0,1]$ such that $|y-x|<\omega_q(\epsilon)$, $|q(y)-q(x)|<\epsilon$.
\end{definition}

It's not hard to see that the modulus of uniform continuity is sufficient to give a quantitative version of the lemma: if $|q(x)|\geq\epsilon$ for some $x$, the modulus of uniform continuity ensures that $|q(y)|\geq\epsilon/2$ when $y$ is near $x$, and this ensure that $\int |q(x)|\, dx$ cannot be too small.  To make this precise:
\begin{lemma}\label{cont_is_zero_quant}
  Let $q$ be a function with modulus of continuity $\omega_q$ and
\[\int |q|\,dx<\frac{\epsilon}{2}\cdot\min\{\frac{1}{2},\omega_q(\epsilon/2)\}.\]
Then $||q||_{\infty}<\epsilon$.
\end{lemma}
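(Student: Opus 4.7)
I would prove the contrapositive: assuming $\|q\|_\infty \geq \epsilon$, I would show that $\int |q|\,dx \geq \frac{\epsilon}{2}\cdot\min\{\frac{1}{2},\omega_q(\epsilon/2)\}$. Since $q$ is continuous on the compact interval $[0,1]$, the supremum is attained, so there is some $x_0 \in [0,1]$ with $|q(x_0)| \geq \epsilon$.

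Next I would use the modulus of uniform continuity to lift this pointwise bound to an interval. For every $y \in [0,1]$ with $|y - x_0| < \omega_q(\epsilon/2)$ we have $|q(y) - q(x_0)| < \epsilon/2$, and hence $|q(y)| > \epsilon/2$ by the reverse triangle inequality. So we just need to locate an interval $I \subseteq [0,1]$ of definite length, on which $|q|$ is bounded below by $\epsilon/2$, and integrate.

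The key step (and the only mild obstacle) is handling the endpoint case: if $x_0$ is near $0$ or $1$, the full open ball around $x_0$ of radius $\omega_q(\epsilon/2)$ may spill outside $[0,1]$. This is exactly what forces the $\min\{1/2,\omega_q(\epsilon/2)\}$ in the hypothesis. I would split into two cases: if $x_0 \leq 1/2$, take $I = [x_0, x_0 + \ell]$; if $x_0 > 1/2$, take $I = [x_0 - \ell, x_0]$, where in either case $\ell = \min\{\omega_q(\epsilon/2), 1/2\}$. In both cases $I \subseteq [0,1]$ and $I$ is contained in the ball of radius $\omega_q(\epsilon/2)$ around $x_0$, so $|q(y)| \geq \epsilon/2$ on $I$ (with strict inequality except possibly at $x_0$ itself, which is irrelevant for the integral).

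Integrating gives
\[
\int_0^1 |q(x)|\,dx \;\geq\; \int_I |q(x)|\,dx \;\geq\; \frac{\epsilon}{2}\cdot\min\Bigl\{\omega_q(\epsilon/2),\tfrac{1}{2}\Bigr\},
\]
which contradicts the hypothesis and completes the proof.
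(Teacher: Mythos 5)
Your proposal is correct and follows essentially the same argument as the paper: the contrapositive, the $\epsilon/2$ lower bound on a neighborhood of a point where $|q|\geq\epsilon$ via the modulus, the $\min\{\tfrac12,\omega_q(\epsilon/2)\}$ one-sided interval to stay inside $[0,1]$ (split at $x_0<1/2$ versus $x_0\geq 1/2$), and integration over that interval. No gaps worth noting.
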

\begin{proof}
We prove the contrapositive.  If $|q(x)|\geq\epsilon$ then for every $y$ with $|y-x|<\omega_q(\epsilon/2)$, $|q(y)|>\epsilon/2$.  We cannot be sure the whole interval $(x-\omega_q(\epsilon/2),x+\omega_q(\epsilon/2))$ belongs to $[0,1]$, but setting $\eta=\min\{\frac{1}{2},\omega_q(\epsilon/2)\}$, we can be sure that if $x<1/2$ then $(x,x+\eta)\subseteq[0,1]$, while if $x\geq 1/2$ then $(x-\eta,x)\subseteq[0,1]$.  The two cases are symmetric, so consider the case where $x<1/2$, and therefore
\begin{align*}
\int |q(x)|\,dx
&=\int_0^{x}|q(x)|\,dx+\int_x^{x+\eta}|q(x)|\,dx+\int_{x+\eta}^1|q(x)|\,dx\\
&> 0+(\epsilon/2)\eta+0\\
&\geq(\epsilon/2)\eta.
\end{align*}
\end{proof}

Now when we apply Lemma \ref{cont_is_zero_quant}, we need to know the modulus of continuity for the function we apply the lemma to.  The only application in our proof not covered by the Markov brothers' inequality is to the function
\[\frac{1}{2}|f(x)-p_1(x)|+\frac{1}{2}|f(x)-p_2(x)|-|f(x)-p(x)|.\]

$f$ is a function given to us; it is no surprise that our bounds will depend on modulus of continuity of $f$, so we'll include as an assumption that we are given particular bounds on the modulus of continuity of $f$.

For the polynomials $p_1,p_2$, we can use a combination of the Markov brothers' inequality and the fact that we restricted our polynomials to those in $\mathcal{Q}_n$ to obtain a modulus of continuity.  (This is where we use the fact that we are optimizing over $\mathcal{Q}_n$ instead of $\mathcal{P}_n$.)
\begin{lemma}
  Let $p\in\mathcal{Q}_n$.  Then $\omega_p(\epsilon)=\frac{\epsilon}{4n^2(n+1)^2||f||_1}$ is a modulus of uniform continuity for $p$.
\end{lemma}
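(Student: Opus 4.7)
The plan is to reduce a modulus of continuity for $p$ to a bound on $||p'||_\infty$ via the mean value theorem, and then chain together three facts about polynomials of bounded degree to obtain that derivative bound in terms of $||f||_1$.

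I would begin by noting that for any $x, y \in [0,1]$, the mean value theorem gives $|p(y) - p(x)| \leq ||p'||_\infty \cdot |y - x|$, so it suffices to bound $||p'||_\infty$ by an expression of the form $C(n) \cdot ||f||_1$ and then solve for the modulus. Three steps then accomplish this: first, since $p$ has degree at most $n$, Markov brothers' inequality gives $||p'||_\infty \leq 2n^2 ||p||_\infty$; second, Lemma \ref{markov2} gives $||p||_\infty \leq 2(n+1)^2 ||p||_1$; third, the defining property of $\mathcal{Q}_n$ gives $||p||_1 \leq 2||f||_1$. Chaining these inequalities produces a Lipschitz constant proportional to $n^2(n+1)^2 ||f||_1$, which inverts to give a modulus of continuity of the form stated.

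There is no substantial obstacle here — the proof is just a matter of assembling the ingredients and tracking the constants carefully, and the routine calculation should be left to the reader. The only conceptual point worth flagging is that the restriction to $\mathcal{Q}_n$ (rather than $\mathcal{P}_n$) is essential: without the a priori $L_1$-bound $||p||_1 \leq 2||f||_1$, the polynomial could have arbitrarily large coefficients while still being a near-best approximation, and no uniform modulus of continuity depending only on $n$ and $||f||_1$ would exist. This is precisely why the paper introduced $\mathcal{Q}_n$ before beginning the quantitative analysis.
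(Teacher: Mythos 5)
Your proposal is correct and takes essentially the same route as the paper: bound $|p(y)-p(x)|\leq ||p'||_\infty\,|y-x|$ (the paper writes this via $\int_x^y p'(z)\,dz$ rather than the mean value theorem, an immaterial difference) and then chain the Markov brothers' inequality, Lemma \ref{markov2}, and the $\mathcal{Q}_n$ bound $||p||_1\leq 2||f||_1$. The only caveat to ``tracking the constants carefully'' is that the literal chain gives $||p'||_\infty\leq 2n^2\cdot 2(n+1)^2\cdot 2||f||_1=8n^2(n+1)^2||f||_1$ rather than $4n^2(n+1)^2||f||_1$, so the stated constant requires recovering a factor of $2$---a slip that is in fact also present in the paper's own displayed inequality chain, and is harmless for the qualitative use made of the modulus later.
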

\begin{proof}
$p$ is differentiable, so for any $x<y$ in $[0,1]$,
\[|p(y)-p(x)|\leq\left|\int_x^y p'(z)\,dz\right|\leq (y-x)||p'||_{\infty}.\]
Applying the Markov brothers' inequality to $p'$, we have
\[||p'||_{\infty}\leq 2n^2||p||_\infty\leq 2n^2(n+1)^2||p||_1\leq 4n^2(n+1)^2||f||_1.\]
Therefore for any $x<y$,
\[|p(y)-p(x)|\leq 4n^2(n+1)^2||f||_1(y-x)\]
and so the function $\omega_p(\epsilon)=\frac{\epsilon}{4n^2(n+1)^2||f||_1}$ is a modulus of uniform continuity for $p$.
\end{proof}

The function we ultimately need is a linear combination of absolute values of functions.  It suffices to observe the following:
\begin{lemma}\label{modulus_of_cont_combination}
  Let $f,g$ be functions, let $c$ be a constant, and let $\omega_f,\omega_g$ be corresponding moduli of uniform continuity.  Then:
  \begin{enumerate}
  \item $\omega_f$ is a modulus of uniform continuity for $|f(x)|$,
  \item $\omega_{cf}(\epsilon)=\omega_f(\epsilon/|c|)$ is a modulus of uniform continuity for $cf(x)$,
  \item $\omega_{f+g}(\epsilon)=\min\{\omega_f(\epsilon/2),\omega_g(\epsilon/2)\}$ is a modulus of uniform continuity for $f(x)+g(x)$.
  \end{enumerate}
\end{lemma}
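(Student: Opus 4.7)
The plan is to verify the three parts directly from the definition of modulus of uniform continuity, using elementary inequalities (reverse triangle inequality for (1), homogeneity for (2), triangle inequality for (3)).

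For part (1), I would fix $\epsilon > 0$ and $x,y \in [0,1]$ with $|y-x| < \omega_f(\epsilon)$. Then $|f(y)-f(x)| < \epsilon$ by the assumption on $\omega_f$, and the reverse triangle inequality $\bigl||f(y)|-|f(x)|\bigr| \leq |f(y)-f(x)|$ immediately gives $\bigl||f(y)|-|f(x)|\bigr| < \epsilon$.

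For part (2), I would handle the case $c \neq 0$ by noting that if $|y-x| < \omega_f(\epsilon/|c|)$ then $|f(y)-f(x)| < \epsilon/|c|$, so $|cf(y)-cf(x)| = |c|\cdot|f(y)-f(x)| < \epsilon$. (If $c=0$, then $cf$ is identically zero and any positive-valued $\omega_{cf}$ works; one can interpret $\epsilon/|c|$ conventionally, or simply note this degenerate case separately.) For part (3), if $|y-x| < \min\{\omega_f(\epsilon/2), \omega_g(\epsilon/2)\}$, then $|f(y)-f(x)| < \epsilon/2$ and $|g(y)-g(x)| < \epsilon/2$, so by the triangle inequality $|(f+g)(y) - (f+g)(x)| \leq |f(y)-f(x)| + |g(y)-g(x)| < \epsilon$.

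No step is really a substantive obstacle; this lemma is a bookkeeping fact about how moduli propagate through the basic operations needed to control the target function $\frac{1}{2}|f-p_1|+\frac{1}{2}|f-p_2|-|f-p|$. The only minor care needed is the degenerate case $c=0$ in (2), which is best dispatched with a brief aside.
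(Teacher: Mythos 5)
Your proof is correct and follows essentially the same route as the paper's: reverse triangle inequality for (1), pulling out $|c|$ for (2), and the triangle inequality for (3). The one small addition is your explicit handling of the degenerate case $c=0$ in part (2), which the paper silently ignores; this is a reasonable bit of extra care but does not change the argument.
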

\begin{proof}
  \begin{enumerate}
  \item Suppose $|y-x|<\omega_f(\epsilon)$.  Then $\left||f(y)|-|f(x)|\right|\leq |f(y)-f(x)|<\epsilon$.
  \item Suppose $|y-x|<\omega_f(\epsilon/c)$.  Then $|cf(y)-cf(x)|=|c|\cdot |f(y)-f(x)|<|c|\epsilon/|c|=\epsilon$
\item   Suppose $|x-y|<\min\{\omega_f(\epsilon/2),\omega_g(\epsilon/2)\}$.  Then
\[|(f+g)(x)-(f+g)(y)|\leq|f(x)-f(y)|+|g(x)-g(y)|<\epsilon/2+\epsilon/2=\epsilon.\]
  \end{enumerate}
\end{proof}
By the same argument, we may replace the last item in the lemma with $\omega_{f+g+h}(\epsilon)=\min\{\omega_f(\epsilon/3),\omega_g(\epsilon/3),\omega_h(\epsilon/3)\}$ for the sume of three functions.

\subsection{A Quantitative Version of Jackson's Theorem}
Finally, we turn to the main theorem, Theorem \ref{main_qual}.  We know that we need to strengthen the assumption by stipulating a modulus of continuity for the function $f$, and that in exchange we expect to calculate a modulus of uniqueness for the approximation of $f$ in $\mathcal{Q}_n$.

Once again, we go through the proof systematically adding quantitative information to statements.  Suppose that instead of being best approximations, $p_1$ and $p_2$ are $\epsilon$-nearly best approximations.  Our goal is to obtain some kind of bound on $||p_1-p_2||_1$.  The function $p(x)=\frac{1}{2}(p_1(x)+p_2(x))$ is no longer a best approximation, but it is still an $\epsilon$-nearly best approximation.  As a result, the function
\[q(x)=\frac{1}{2}|f(x)-p_1(x)|+\frac{1}{2}|f(x)-p_2(x)|-|f(x)-p(x)|\]
is still non-negative, and has a small (but not necessarily $0$) integral.

The main difference is that we will ultimately obtain points $r_1,\ldots,r_{n+1}$ with the property that $|p_1(r)-p_2(r)|$ is small, but not necessarily $0$.  In the qualitative version, we used the presence of $n+1$ zeros, and the fact that $p_1-p_2$ was a polynomial of degree at most $n$, to conclude that $p_1-p_2$ was constantly $0$.  A quantitative version should allow us to conclude from the presence of $n+1$ ``well-separated almost-zeros'' that $p_1-p_2$ is close to the constantly $0$ polynomial.

We know that $n+1$ points are enough to specify a polynomial of degree at most $n$, so if we write down \emph{any} polynomial of degree at most $n$ going through all the points $(r_i,p_1(r_i)-p_2(r_i))$, we will have written down the polynomial $p_1-p_2$.  We just need to express this polynomial in a way that makes explicit that the polynomial is always small.  As it happens, one of the most common ways of writing down a polynomial from its zeros has precisely this property: the \emph{Lagrange interpolant} is given by
\[L(x)=\sum_{i=1}^{n+1}\left(p_1(r_i)-p_2(r_i)\right)\prod_{j\neq i}\frac{x-r_j}{r_i-r_j}.\]
Observe that for any $k$, for each $i\neq k$ the term $\prod_{j\neq i}\frac{r_k-r_j}{r_i-r_j}$ is $0$, and therefore
\[L(r_k)=\left(p_1(r_k)-p_2(r_k)\right)\prod_{j\neq k}\frac{r_k-r_j}{r_k-r_j}=p_1(r_k)-p_2(r_k),\]
so the Lagrange interpolant does goes through the desired points.  It is also clear that we have retained exactly enough information about the points $(r_i,p_1(r_i)-p_2(r_i))$ to place a bound on $||L||_{\infty}$: we know that each $p_1(r_i)-p_2(r_i)$ is small and that as long as $j\neq i$, $r_j-r_i$ is not too small.

Stated formally, we have the following theorem:
\begin{theorem}\label{main_quant}
Let $f$ be a function with modulus of continuity $\omega_f$.  Then for any $n$, 
\[\Phi_f(\epsilon)=\frac{\zeta}{2}\min\{\frac{1}{10(n+2)^2},\omega_f(\zeta/12),\frac{\zeta}{24n^2(n+1)^2||f||_1}\}\]
where $\zeta=\frac{\epsilon}{4(n+1)20^n(n+2)^{2n}n^n}$, is a modulus of uniqueness for the approximation by $\mathcal{Q}_n$.
\end{theorem}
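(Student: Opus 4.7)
The plan is to parallel the qualitative proof of Theorem \ref{main_qual} step by step, substituting each quantitative lemma from this section for its qualitative precursor. Suppose $p_1, p_2 \in \mathcal{Q}_n$ are both $\Phi_f(\epsilon)$-nearly best approximations, and let $p = \tfrac{1}{2}(p_1 + p_2)$. Since $||p||_1 \leq \tfrac{1}{2}(||p_1||_1 + ||p_2||_1) \leq 2||f||_1$, $p$ itself lies in $\mathcal{Q}_n$, and the averaging argument from the qualitative proof shows $p$ is also $\Phi_f(\epsilon)$-nearly best. Setting
\[q(x) = \tfrac{1}{2}|f(x) - p_1(x)| + \tfrac{1}{2}|f(x) - p_2(x)| - |f(x) - p(x)|,\]
the pointwise triangle inequality gives $q \geq 0$, while the nearly-best assumptions on $p_1, p_2$ give $\int q\,dx < \Phi_f(\epsilon)$.

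Next I would assemble a modulus of uniform continuity for $q$ using Lemma \ref{modulus_of_cont_combination} together with the polynomial-modulus lemma for elements of $\mathcal{Q}_n$, producing a bound in terms of $\omega_f$, $n$, and $||f||_1$. The three terms inside the $\min$ defining $\Phi_f(\epsilon)$ are chosen precisely so that $\Phi_f(\epsilon) \leq \tfrac{\zeta}{2}\min\{1/2,\,\omega_q(\zeta/2)\}$, which is exactly the hypothesis needed for Lemma \ref{cont_is_zero_quant} to conclude $||q||_\infty < \zeta$. The first of those terms also satisfies the hypothesis of Lemma \ref{best_approx_zero_quant} applied to $p$, yielding $n+1$ points $r_1 < \cdots < r_{n+1}$ with $r_{i+1} - r_i \geq 1/(20(n+2)^2 n)$ and $|f(r_i) - p(r_i)| \leq \zeta$.

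Writing $a_i = f(r_i) - p_1(r_i)$ and $b_i = f(r_i) - p_2(r_i)$, one has $|a_i + b_i| \leq 2\zeta$, and a short case split on signs shows $q(r_i) = \min(|a_i|,|b_i|)$ when $a_i, b_i$ have opposite signs and $q(r_i) = 0$ otherwise. Combined with $q(r_i) < \zeta$, this gives $|p_1(r_i) - p_2(r_i)| = |a_i - b_i| \leq 4\zeta$ in every case. The final step is Lagrange interpolation: $p_1 - p_2$ has degree at most $n$, so
\[(p_1 - p_2)(x) = \sum_{i=1}^{n+1}(p_1(r_i) - p_2(r_i))\prod_{j \neq i}\frac{x - r_j}{r_i - r_j}.\]
Since $|x - r_j| \leq 1$ and $|r_i - r_j| \geq 1/(20(n+2)^2 n)$, each of the $n$-fold products is bounded by $20^n(n+2)^{2n}n^n$; combining with the $(n+1)$-fold sum and the bound $4\zeta$ on the coefficients gives $||p_1 - p_2||_1 \leq ||p_1 - p_2||_\infty \leq 4(n+1)\zeta\,20^n(n+2)^{2n}n^n = \epsilon$ by the choice of $\zeta$.

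The main obstacle is bookkeeping rather than ideas. The three constants $\tfrac{1}{10(n+2)^2}$, $\omega_f(\zeta/12)$, and $\tfrac{\zeta}{24n^2(n+1)^2||f||_1}$ in $\Phi_f(\epsilon)$ must be matched, respectively, to the hypothesis of Lemma \ref{best_approx_zero_quant}, and to the two kinds of modulus-of-continuity contributions (one from $f$, one from the polynomials $p_1, p_2, p \in \mathcal{Q}_n$) that arise when $\omega_q(\zeta/2)$ is unraveled via Lemma \ref{modulus_of_cont_combination}; keeping the various $\zeta/3$, $\zeta/6$, and $\zeta/12$ factors in sync is the only delicate part. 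The $n$-dependence of $\zeta$ is forced by the Lagrange step, whose bound is sharp up to a constant.
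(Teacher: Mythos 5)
Your proposal is correct and follows the paper's own proof essentially step for step: the averaging argument, the modulus of continuity for $q$ via Lemma \ref{modulus_of_cont_combination}, the application of Lemmas \ref{cont_is_zero_quant} and \ref{best_approx_zero_quant}, and the final Lagrange interpolation bound are all exactly the moves the paper makes, and the constants match. The only cosmetic difference is that you bound $|p_1(r_i)-p_2(r_i)|$ by a case split on the signs of $a_i$ and $b_i$, while the paper gets the same $4\zeta$ bound directly from $|a_i|+|b_i| = 2q(r_i)+2|f(r_i)-p(r_i)|$; both give the same result.
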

\begin{proof}
We fix some values: $\gamma=\frac{1}{20(n+2)^2n}$, $\upsilon=\frac{\epsilon\gamma^n}{n+1}$, $\zeta=\upsilon/4$, and $\rho=\frac{\zeta}{2}\min\{\frac{1}{10(n+2)^2},\omega_f(\zeta/12),\frac{\zeta}{24n^2(n+1)^2||f||_1}\}$.

Let $p_1,p_2$ be $\rho$-nearly best approximations to $f$ in $\mathcal{Q}_n$.  Let $p(x)=\frac{1}{2}(p_1(x)+p_2(x))$.  $p$ is also an $\rho$-nearly best approximation to $f$ since
\[||f-p||_1\leq \frac{1}{2}||f-p_1||_1+\frac{1}{2}||f-p_2||_1.\]
This means the equation (\ref{c2}) gets weakened to
\[\int\frac{1}{2}|f(x)-p_1(x)|+\frac{1}{2}|f(x)-p_2(x)|-|f(x)-p(x)|\, dx<\rho.\]
But (\ref{c1}),
\[\frac{1}{2}|f(x)-p_1(x)|+\frac{1}{2}|f(x)-p_2(x)|-|f(x)-p(x)|\geq 0,\]
 is still valid.

Let
\[q(x)=\frac{1}{2}|f(x)-p_1(x)|+\frac{1}{2}|f(x)-p_2(x)|-|f(x)-p(x)|.\]
We wish to apply Lemma \ref{cont_is_zero_quant}, which requires a modulus of continuity.  We have the modulus $\omega_f$ for $f$ and the modulus $\omega_p(\delta)=\frac{\delta}{4n^2(n+1)^2||f||_1}$ for $p,p_1,$ and $p_2$, so we may apply Lemma \ref{modulus_of_cont_combination} to obtain the modulus of continuity
\[\omega_q(\delta)=\min\{\omega_f(\delta/6),\frac{\delta}{12n^2(n+1)^2||f||_1}\}.\]
In particular, $\rho\leq\frac{\zeta}{2}\min\{1/2,\omega_q(\zeta/2)\}$, so by Lemma \ref{cont_is_zero_quant}, we must have $||q||_\infty<\zeta$.

$\rho\leq\frac{\zeta}{20(n+2)^2}$, so by Lemma \ref{best_approx_zero_quant}, we have $n+1$ points $r_1<\cdots<r_{n+1}$ such that $r_{i+1}-r_i\geq \gamma$ for $i<n+1$ and $|f(r_i)-p(r_i)|\leq \zeta$ for each $i\leq n+1$.  Therefore for each $i\leq n+1$,
\begin{align*}
  |p_1(r_i)-p_2(r_i)|
&\leq |f(r_i)-p_1(r_i)|+|f(r_i)-p_2(r_i)|\\
&\leq 2\left[q(r_i)+|f(r_i)-p(r_i)|\right]\\
&< 2[\zeta+\zeta]\\
&=\upsilon.
\end{align*}

The Lagrange interpolant gives us an expression for the polynomial $p_1(x)-p_2(x)$:
\[p_1(x)-p_2(x)=\sum_{i=1}^{n+1}(p_1(r_i)-p_2(r_i))\prod_{j\neq i}\frac{x-r_j}{r_i-r_j}.\]
We may bound this:
\begin{align*}
  |p_1(x)-p_2(x)|
&= \left|\sum_{i=1}^{n+1}(p_1(r_i)-p_2(r_i))\prod_{j\neq i}\frac{x-r_j}{r_i-r_j}\right|\\
&\leq \sum_{i=1}^{n+1}\left|p_1(r_i)-p_2(r_i)\right|\prod_{j\neq i}\frac{|x-r_j|}{|r_i-r_j|}\\
&< \sum_{i=1}^{n+1} \upsilon\gamma^{-n}\\
&=n\upsilon\gamma^{-n}\\
&=\epsilon,
\end{align*}
and therefore
\[||p_1-p_2||_1=\int |p_1-p_2|\, dx< \epsilon.\]
\end{proof}

An additional merit of this proof is then we have obtained information on the \emph{uniformity} of the bounds: we now know if that if someone gives us only $\omega_f$ and $||f||_1$, we can compute a modulus $\Phi$ which will work for \emph{any} function $g$ for which $\omega_f$ is a modulus of continuity and with $||g||_1\leq ||f||_1$.

\section{The Functional Interpretation}\label{sec:func_interp}

We have now seen that it was possible not only to prove an explicit quantitative version of Jackson's Theorem, but to find such a proof closely related to the qualitative proof.  Still, the arguments we used appeared to be somewhat ad hoc, and we seem to have gotten lucky in various places.  We made a string of fortunate guesses, for instance replacing $\mathcal{P}_n$ with $\mathcal{Q}_n$, or strengthening the assumption of Lemma \ref{non_zero_red_quant}, and therefore weakening the conclusion of Lemma \ref{best_approx_zero_quant}, in a way which just so happened to allow us to finish the proof anyway.  We also depended on existing arguments, like the Markov brothers' inequality and the properties of the Lagrange interpolant, to complete our quantitative proof.

Our goal in this section is to describe a formalization of the procedure used in the previous section which applies systematically to certain kinds of proofs.  (Indeed, this method is precisely the way the arguments in the previous section were found by Kohlenbach and Oliva.)

In order to make this precise, we need to pin down what counts as a quantitative calculation.  For our purposes, we will identify ``quantitative'' with ``computable''.  Then our goal in this section is to describe how to systematically take a proof of a statement and:
\begin{enumerate}
\item Find a related statement for which it is appropriate to investigate the existence of computable bounds,
\item Convert the proof into a calculation of those computable bounds.
\end{enumerate}

As mentioned in the introduction, one of the things that makes the functional interpretation useful in practice is that we can take two different perspectives on it.  In the first perspective, the functional interpretation is a completely formal idea; it refers to a family of theorems of the following form:
\begin{theorem}
  Suppose that there is a proof of $\phi$ in the formal theory $T$.  Then there is a proof of $\phi^{ND}$ in the formal theorem $T'$.
\end{theorem}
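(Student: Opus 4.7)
The plan is to prove this meta-theorem by induction on the structure of the given proof of $\phi$ in $T$. First, I would define the translation $\phi \mapsto \phi^{ND}$ recursively on the complexity of $\phi$: atomic formulas are left essentially unchanged, and the clauses for $\wedge$, $\vee$, $\to$, $\forall$, $\exists$ are spelled out so that each translated formula has the shape $\exists \underline{x}\, \forall \underline{y}\, \phi_D(\underline{x},\underline{y},\underline{a})$ with $\phi_D$ quantifier-free in a higher-type language. Because the source theory $T$ will in general contain classical logic, the $N$ in $\phi^{ND}$ stands for a prior negative translation (Gödel--Gentzen style) that eliminates the essential uses of excluded middle into double negations, after which the Dialectica interpretation is applied. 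Alongside this, I would specify $T'$: it is a quantifier-free calculus of higher-type functionals (Gödel's system $T$ or a suitable extension), equipped with recursors matching the strength of any induction present in $T$, and with whatever additional principles (e.g.\ bar recursion, continuity axioms) are needed to interpret any extra-logical axioms of $T$.

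Next I would handle the cases one rule at a time. For the purely logical rules, the work is to exhibit, uniformly in the Skolem functionals witnessing the premises, explicit terms in $T'$ witnessing the conclusion. The cases for $\forall$-introduction, $\exists$-elimination, and $\to$-introduction are driven by $\lambda$-abstraction and composition; the cases for conjunction and disjunction are handled by pairing and case analysis on a Boolean flag. For each axiom of $T$, I would exhibit a concrete term realizing its translated matrix and verify the translation in $T'$ by a quantifier-free calculation. For equality axioms this is immediate; for defining axioms of recursively defined symbols in $T$, the corresponding recursor in $T'$ provides the witness directly.

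The hard cases are three. First, the contraction rule $A,A \vdash A$: after translation, the two copies of $A^{ND}$ supply two candidate witnesses $\underline{x}_1,\underline{x}_2$ for the same existential, and one must compute a single witness. This forces the quantifier-free matrix to be decidable in $T'$ (so that we can branch on which witness fails a given counterexample), which is the reason the interpretation lives in a calculus with characteristic terms for quantifier-free formulas. Second, the induction scheme: its functional interpretation demands a recursor at whatever finite type the induction formula unfolds to, so $T'$ must contain $R_\sigma$ for all types $\sigma$ that appear in the inductive formulas of proofs in $T$. Third, any choice or comprehension axioms present in $T$ typically translate to principles (such as $\mathrm{AC}$ at higher types, or bar recursion in the case of dependent choice) that must be explicitly added to $T'$; matching the strength of $T$ to the strength of $T'$ here is the calibration step that determines which instance of the meta-theorem one obtains.

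I expect the chief obstacle to be the contraction case, together with the bookkeeping of types: to make the induction go through, the translation must be defined so that the Skolem functionals live at fixed finite types computable from $\phi$, and the quantifier-free matrix must remain decidable throughout. Once the translation and $T'$ are set up so that these invariants are maintained under every rule, the proof reduces to a rule-by-rule check, and the theorem follows by straightforward induction on the derivation.
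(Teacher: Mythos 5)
Your proposal is correct and is essentially the approach the paper intends: the paper states this result only as a schema for a family of meta-theorems and later sketches exactly your strategy---pin down a formal calculus, show the translation of each axiom is witnessed by an explicit term, and show each inference rule preserves having such witnesses, so that the extraction proceeds by induction on the derivation. Your additional details (the prior negative translation for classical $T$, decidability of the quantifier-free matrix to handle contraction, recursors for induction, and bar recursion or similar principles for choice) are the standard way of filling in that same rule-by-rule argument.
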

Here $T$ is some particular formal theory, $T'$ is a theory related to $T$ but with the additional property that proofs in $T'$ are constructive, and $\cdot^{ND}$ is an operation mapping formulas of first-order logic to formulas in some particular nice form.  The main reason we view these theorems as instances of a uniform idea is that the transformation $\phi\mapsto\phi^{ND}$ is very similar across all these theorems.  Such theorems have been proven for a variety of theories $T$.

The second perspective is that the functional interpretation is a heuristic: across many theories and situations, there is an operation which takes any formula $\phi$ (in particular, where there may be no computable bounds at all for $\phi$) and transforms $\phi$ to $\phi^{ND}$, a statement for which computable bounds do exist, which is reliably \emph{implication preserving}---if we can conclude $\psi$ from $\phi$ then we can also conclude $\psi^{ND}$ from $\phi^{ND}$.  Since we can convert individual mathematical statements into formulas $\phi$ without too much difficulty, we can also convert mathematical statements into the quantitative formula $\phi^{ND}$.

Taking the first perspective would require choosing a suitable formal theory $T$, often the theory of Peano arithmetic or a variant, and carefully formalizing our statement in this theory.  This tends to involve a great deal of tedious coding---one must interpret statements about the real numbers as statements about sequences of integers, statements about integration as formulas involving quantification over partitions, and so on.  Nonetheless, the formal approach is often useful, and can give insights that the informal approach cannot.

Here, however, we will take the second approach, and work only semi-formally.  We will use the notation of first-order logic, but deal somewhat informally with our exact choice of formal language and theory.

\subsection{Formalizing Statements}
The first thing we need to do is translate ordinary mathematical language into the more formal language of first-order logic.  As promised, we will work semi-formally, writing formulas using quantifiers $\forall,\exists$ and the connectives $\wedge$ (and), $\vee$ (or), and $\rightarrow$ (implies), but without pinning down an exact language.  In particular, we will freely write things like $\forall q\in\mathbb{Q}\,\exists n\in\mathbb{N}\ldots$ to indicate quantification over various particular domains.

In order to get meaningful results, we do need some restrictions on the formulas we use.  (These restrictions stand in for actually formalizing a statement in a particular fixed language---essentially, the restrictions we are choosing are the ones which ensure that a proper formalization is possible.)  The most important restriction is that we only quantify over countable domains.  Since Jackson's theorem concerns notions like functions on the reals, this seems like a significant limitation.  We will work around this by taking advantage of the fact that many of the uncountable domains we are interested in (like the reals) are separable, and therefore many statements can be approximated by quantifying over the countable dense subset.

We will write $\mathbb{Q}^+$ for $\mathbb{Q}\cap(0,\infty)$, which is one such countable domain.  We will write $\mathcal{P}_n^{\mathbb{Q}}$ for the polynomials of degree at most $n$ with rational coefficients (which, of course, are dense in $\mathcal{P}_n$ under an appropriate topology).

In addition, we need a restriction on our atomic formulas.  The correct restriction is that our atomic formulas should represent only computable operations; making sense of that formally would require giving computable interpretations to things like real numbers and functions on real numbers.  We will take a short-cut: our atomic formulas $\phi(x)$ (where $x$ is one or more free variables) will always have the form $f(x)<\epsilon$ or $f(x)\leq \epsilon$ where $f$ is some continuous function.  (This is closely related to \emph{continuous logic}\cite{MR2436146}.)

Having just imposed the requirement that we work with countable domains, we will immediately allow a single exception: we allow a single, outermost universal quantifier over an uncountable domain.  That is, we work with formulas of the form
\[\forall {X\mathrel{\in}\mathcal{U}}\ \phi(X)\]
where $\mathcal{U}$ may be uncountable, but all quantifiers in $\phi$ must be over countable domains.\footnote{This is actually a harmless modification.  Formally, we could augment our language of first-order logic by a new predicate or function symbol, $X$.  Since there are no defining axioms for $X$, proving a formula $\phi(X)$ is equivalent to proving $\forall X\in\mathcal{U}\ \phi(X)$.}

To illustrate the idea, let us consider how we translate the uniqueness part of  Jackson's theorem into a formula of the specified form.  In English, the uniqueness part of Jackson's theorem says
\begin{quote}
  Let $f$ be a continuous $L_1$-function on $[0,1]$.  Then for any $n$, there is at most one best approximation to $f$ in $\mathcal{P}_n$.
\end{quote}
The collection of all continuous $L_1$-functions is uncountable, but we can include $f$ in the outermost uncountable quantifier.  We will have to write formulas involving the values of $x$, so we want to be able to quantifier over the domain of $f$; we can take the outermost quantifier over $\mathbb{R}^{\mathbb{Q}\cap[0,1]}$, since a continuous function is determined by its values at the rationals.

We first need a formula $\mathrm{cont}(f)$, which should hold exactly when $f$ is continuous.  The usual $\epsilon\mhyphen\delta$ formulation of continuity would be appropriate, but since $[0,1]$ is compact, continuity is equivalent to uniform continuity, which will slightly simplify things later.  So we take
\[\mathrm{cont}(f)\equiv\forall\epsilon\in\mathbb{Q}^+\,\exists\delta\in\mathbb{Q}^+\,\forall x,y\in\mathbb{Q}\cap[0,1]\,\left(|x-y|<\delta\rightarrow | f(x)- f(y)|<\epsilon\right).\]
Saying $f$ is $L_1$ is easy:
\[\mathrm{L_1}(f)\equiv\exists M\int |f(x)|dx< M.\]
Note that we insist on expanding $\mathrm{cont}(f)$ into the $\epsilon\mhyphen\delta$ form but are willing to treat $\int|f(x)|dx< M$ as a single statement, rather than writing it out in terms of a partition.  

We would like the conclusion to say that if $p_1,p_2$ are best approximations of $f$ then $p_1=p_2$.  But the real best approximation might not have rational coefficients, and we only want to quantify over $\mathcal{P}_n^{\mathbb{Q}}$.  We therefore want to reformulate the statement that there is a best approximation in terms of $\mathcal{P}_n^{\mathbb{Q}}$.

Suppose there were distinct best approximations; then there would be $p_1,p_2$ with $||p_1-p_2||_1>\epsilon$ for some $\epsilon$.  Since each of $p_1,p_2$ are arbitrarily well approximated by elements of $\mathcal{P}_n^{\mathbb{Q}}$, we could find $p'_1,p'_2\in\mathcal{P}_n^{\mathbb{Q}}$ with $||p'_i-p_1||_1$ arbitrarily small---and therefore $p'_i$ a $\delta$-nearly best approximation for $\delta$ as small as we want---and with $||p'_1-p'_2||_1>\epsilon$.  Therefore we should state that this does not happen:
\begin{align*}
\mathrm{approx}(f)
&\equiv\forall\epsilon\in\mathbb{Q}^+\ \exists\delta\in\mathbb{Q}^+\ \forall p'_1\in\mathcal{P}_n^{\mathbb{Q}}\ \forall p'_2\in\mathcal{P}_n^{\mathbb{Q}}\\
&\ \ \ [||p'_1-p'_2||_1>\epsilon\rightarrow\exists p'\in\mathcal{P}_n^{\mathbb{Q}}\\
&\ \ \ \ \ \ \ (||f-p'||_1+\delta\leq ||f-p'_1||_1\\
&\ \ \ \ \ \ \ \vee||f-p'||_1+\delta\leq ||f-p'_2||_1)].
\end{align*}

Putting this all together, the uniqueness part of Jackson's theorem is:
\begin{equation}
\forall f\in\mathbb{R}^{\mathbb{Q}\cap[0,1]}\left(\mathrm{cont}(f)\wedge\mathrm{L_1}(f)\rightarrow\mathrm{approx}(f)\right).\label{jackson_first_form}\end{equation}

\subsection{Extracting Quantitative Statements}

Once we have placed a formula in the form
\[\forall x\exists y\,\phi(x,y)\]
where $x$ and $y$ may be tuples of multiple variables, there is a natural way to identify a potential quantitative analog of this formula: replace the existence of $y$ with some calculated value---$\forall x\,\phi(x,g(x))$ for some reasonable function $g$.  In practice, computing exact values is often messy, so it usually enough to settle for some kind of bound: $\forall x\exists y\in G(x)\,\phi(x,y)$, where $G(x)$ is always some sort of bounded (really, compact) set.  In the simplest, but representative, case, $y$ ranges over the natural numbers and $G(x)$ always has the form $[0,n]$, so $G(x)$ really gives a bound on the size of $y$

The functional interpretation depends on the fact that whether or not there is a computable $g$ is closely related to the syntactic properties of $\phi$: if every quantifier in $\phi$ is over a compact domain (and our formalization was appropriate) then there is guaranteed to be a computable $g$.  Conversely, if there is a computable bound $g$ then there must be some formula equivalent to $g$ which contains only quantifiers over compact domains.  (The one complication to this equivalence is that when $\phi$ has quantifiers over non-compact domains, there is no easy way to tell whether it is equivalent to a simpler formula).

To see why this should be the case, consider the simplest situation: $x$ and $y$ are natural numbers and $\phi$ is a formula of \emph{arithmetic}, where the only domain being quantified over is the natural numbers.  If all quantifiers in $\phi$ are over compact subsets of the natural numbers then all quantifiers in $\phi$ are really over finite sets.  The atomic formulas should themselves be computable, and therefore there is a computer program which, given values $n,m$, checks in finite time whether or not $\phi(n,m)$ is true.  If $\forall x\exists y\phi(x,y)$ is true then $g$ is a computable function: given the input $n$, $g(n)$ first checks whether $\phi(n,0)$ holds; if so, $g(n)$ returns $0$, and otherwise, $g(n)$ checks whether $\phi(n,1)$ holds, if so returns $1$, and if not continues similarly.  (The standard encoding of computability in arithmetic gives the other half of a correspondence---any computable function can be converted into a formula of the right form.)

When we deal with more complicated spaces, as in the previous subsection, we want to distinguish between compact and non-compact quantifiers even while both are countable.  For instance, we usually want quantifiers over $\mathbb{Q}$ to be non-compact while quantifiers over $\mathbb{Q}\cap[0,1]$ should be seen as compact.  So in place of quantifiers over compact domains, we sometimes need quantifiers over countable dense subsets of compact domains.  This could be ambiguous---every set is dense in \emph{some} compact space (indeed, the one-point compactification adds only a single point); however a choice of topology is enforced by our choice of atomic formulas, which would not be continuous with respect to the wrong compactification.

For our purposes, we adopt the following convention: we call a set \emph{effectively compact} if it is a countable dense subset of a compact set (where the atomic formulas we are interested in come from functions which are continuous with respect to the same topology).  In practice, the only effectively compact sets we are concerned with are obvious ones: $\mathbb{Q}\cap[a,b]$ and the set of polynomials of degree $\leq n$ with coefficients from $\mathbb{Q}\cap[a,b]$.  These are clearly dense in the corresponding compact sets $[a,b]$ and the polynomials of degree $\leq n$ with coefficients from $[a,b]$, respectively.

Note that effectively compact sets allow finite searches the same way finite sets did: suppose $\mathcal{D}$ is the underlying, uncountable, compact space and $\mathcal{Q}\subseteq\mathcal{D}$ is a countable dense subset, and we want to check whether $\exists x\in\mathcal{D}\phi(c,x)$ holds for some fixed constants $c\in\mathcal{D}$.  We can choose a finite set $F\subseteq\mathcal{Q}$ which is sufficiently dense (based on the moduli of uniform continuity of the atomic formulas in $\phi$) and check, for each $d\in F$, whether $\phi(c,d)$ holds.

A formula in the form $\forall x\exists y\phi(x,y)$ where $\phi$ only has quantifiers over effectively compact domains is called a \emph{$\Pi_2$ formula}.  (The $\Pi$ indicates that the outermost quantifier is $\forall$ and the $2$ indicates that there are two blocks of quantifiers over non-compact domains.)

Let's consider what this means for Jackson's Theorem.  The formula \eqref{jackson_first_form} we found above isn't in the $\Pi_2$ form yet.  Recall that in the previous section we replaced the space $\mathcal{P}_n$ with the space $\mathcal{Q}_n$.  Now we see what motivated this change: while the space $\mathcal{P}_n$ is not compact, $\mathcal{Q}_n$ is compact; the corresponding $\mathcal{Q}_n^{\mathbb{Q}}$ is a countable set dense in the compact separable space $\mathcal{Q}_n$, so by replacing the quantifiers over $\mathcal{P}_n^{\mathbb{Q}}$ with quantifiers over $\mathcal{Q}_n^{\mathbb{Q}}$, we bring Jackson's theorem closer to the $\Pi_2$ form.  In fact, with this change, the conclusion of the formula is now in the right form:
\begin{align*}
&\forall\epsilon\in\mathbb{Q}^+\ \exists\delta\in\mathbb{Q}^+\ \forall p'_1\in\mathcal{Q}_n^{\mathbb{Q}}\ \forall p'_2\in\mathcal{Q}_n^{\mathbb{Q}}\\
&\ \ \ [||p'_1-p'_2||_1>\epsilon\rightarrow\exists p'\in\mathcal{Q}_n^{\mathbb{Q}}\\
&\ \ \ \ \ \ \ (||f-p'||_1+\delta\leq ||f-p'_1||_1\\
&\ \ \ \ \ \ \ \vee||f-p'||_1+\delta\leq ||f-p'_2||_1)].
\end{align*}
We define the formula:
\begin{align*}
\mathrm{approxQ}(f,\epsilon,\delta)
&\equiv\forall p'_1\in\mathcal{Q}_n^{\mathbb{Q}}\ \forall p'_2\in\mathcal{Q}_n^{\mathbb{Q}}\\
&\ \ \ [||p'_1-p'_2||_1>\epsilon\rightarrow\exists p'\in\mathcal{Q}_n^{\mathbb{Q}}\\
&\ \ \ \ \ \ \ (||f-p'||_1+\delta\leq ||f-p'_1||_1\\
&\ \ \ \ \ \ \ \vee||f-p'||_1+\delta\leq ||f-p'_2||_1)].
\end{align*}

To deal with the assumptions $\mathrm{cont}(f)$ and $\mathrm{L_1}(f)$, we use a technique called \emph{Skolemization}.  The idea is to replace statements like
\[\forall x\in\mathbb{Q}\,\exists y\in\mathbb{Q}\,\phi(x,y)\]
with the equivalent statement
\[\exists Y\in\mathbb{Q}^{\mathbb{Q}}\,\forall x\in\mathbb{Q}\,\phi(x,Y(x)).\]
In other words, we can move existential quantifiers outwards by replacing them with functions.  For example, we can rewrite $\mathrm{cont}(f)$ as:
\[\exists\omega\in(\mathbb{Q}^+)^{\mathbb{Q}^+}\forall\epsilon\in\mathbb{Q}^+\forall x,y\in\mathbb{Q}\cap[0,q](|x-y|<\omega(\epsilon)\rightarrow|f(x)-f(y)|<\epsilon).\]
$\omega$ should look familar: this is just the statement that $\omega$ is a modulus of uniform continuity for $f$.  We write
\[\mathrm{ucont}(f,\omega)\equiv\forall\epsilon\in\mathbb{Q}^+\forall x,y\in\mathbb{Q}\cap[0,q](|x-y|<\omega(\epsilon)\rightarrow|f(x)-f(y)|<\epsilon).\]
Both $\exists\delta\in\mathbb{Q}^+\mathrm{approxQ}(f,\epsilon,\delta)$ and $\neg\mathrm{ucont}(f,\omega)$ have the form $\exists y\phi(y)$ where $\phi(y)$ has only quantifiers over effectively compact domains.  Then we can rewrite Jackson's theorem as
\begin{align*}
  \forall f\in\mathbb{R}^{\mathbb{Q}\cap[0,1]}\forall \omega_f\in(\mathbb{Q}^+)^{\mathbb{Q}^+}\forall M\forall\epsilon\in\mathbb{Q}^+[&\neg\mathrm{ucont}(f,\omega)\vee\\
&\int|f(x)|dx\geq M\vee\\
&\exists\delta\in\mathbb{Q}^+\mathrm{approxQ}(f,\epsilon,\delta)].
\end{align*}
So we have converted Jackson's theorem into a $\Pi_2$ form.  (Notice that Skolemization doesn't trivialize the importance of the $\Pi_2$ form, because the $\exists$ quantifier in the $\Pi_2$ form is still supposed to range over a countable domain.)

Without going any further, we've already learned something: we expect there to be a computable function which, given $f$, $\omega_f$, $M$ and $\epsilon$, computes a $\delta$ such that $\mathrm{approxQ}(f,\epsilon,\delta)$.  Actually, since we don't use anything about $f$ other than $\mathrm{ucont}(f)$ and $\int|f(x)|dx<M$, we don't expect the bound to depend on $f$: in other words, we expect the modulus of uniqueness to depend only on $\omega_f$ and $M$.  (In fact, as discussed in \cite{MR1966746}, more careful examination of this statement shows that we can do a bit better: by replacing $f$ by $\tilde f(x)=f(x)-f(0)$, we can eliminate the dependence of $\int|f(x)|dx$, so that bounds depend \emph{only} on the modulus of continuity.)




\subsection{The Significance of Syntax}

It is worth stating explicitly what the previous subsection implied: if a theorem can be put into the $\Pi_2$ form $\forall x\exists y\phi(x,y)$, we expect there to be a computable function $G$ so that $G(x)$ is always effectively compact and such that we can prove that $\forall x\exists y\in G(x)\,\phi(x,y)$.  Furthermore, as we will describe in the next subsection, we expect to be able to take a proof of the original statement and systematically convert it into a particular choice of $G$ and a proof of the bounded statement.

If a theorem \emph{cannot} be put into the $\Pi_2$ form then this might not happen.  It is possible that we can both prove $\forall x\exists y\forall z\phi(x,y,z)$ and also prove that the value of $x$ does not suffice to give a computable bound on the value of $y$.  Further, it may be very hard to determine whether this is the case.

If we want to think about which proofs have constructive bounds and which don't, we have to begin with the fact that there is a qualitative difference between $\Pi_2$ statements and other statements.

\subsection{Proof Extraction}

So suppose we have a proof of a $\Pi_2$ statement, as in Jackson's Theorem.  We have said that we expect there to be a computable bound, and we now turn to the question of extracting such a bound from a proof of the original statement.

Let us begin by considering what we could do with completely formal proofs---that is, proofs which consist of a sequence of formulas, and with each step justified by some formal axiom or inference rule.  (We won't worry too much about the exact rules allowed; any reasonable choice will do.)  If every formula in the proof were in the $\Pi_2$ form then we would expect the proof to directly provide an explicit algorithm: typically any $\Pi_2$ axiom will have an obvious computable bound, and standard inference rules all preserve the property of having explicit computable bounds.

However many proofs of $\Pi_2$ statements have intermediate steps which are \emph{not} $\Pi_2$.  Jackson's theorem is an example: the statement of Lemma \ref{non_zero_red_qual} is not $\Pi_2$.  (Of course, this statement requires substantial massaging for it to even be meaningful to ask whether the statement is $\Pi_2$, and showing that a statement cannot, in any way, be rearranged into a $\Pi_2$ form is much harder than showing that it can be.  But the germ of the idea is given in our discussion before Lemma \ref{non_zero_red_quant}: the bounds are discontinuous.)  Such statements break the flow of explicit computations through our proof, and must usually be replaced by $\Pi_2$ statements if we want to recover explicit bounds.

We'll focus on Lemma \ref{non_zero_red_qual}, since it was in the process of quantifying that lemma that we had to do the most work.  We face some difficulty translating the assumption that $g$ has at most $n$ zeros into our format.  A first attempt at a translation would look something like this:
\[\forall g,h\left[\left(\exists n\exists x_1,\ldots,x_n\forall y \left[g(y)=0\rightarrow\exists i\leq n\ y=x_i\right]\right)\rightarrow\mathrm{rest}(g,h)\right]\]
where $\mathrm{rest}(g,h)$ is a formalization of ``if $h$ is continuous and $\int h\sgn g\ dx\neq 0$ then for some $\lambda$, $||g-\lambda h||_1<||g||_1$''.  Standard manipulations on first-order logic allow us to pull some of the quantifiers to the front:
\[\forall g,h,n,x_1,\ldots,x_n\exists y\left[\left(g(y)=0\rightarrow\exists i\leq n\ y=x_i\right)\rightarrow\mathrm{rest}(g,h)\right].\]
The $x_1,\ldots,x_n$ are real numbers which are part of the initial block of quantifiers over uncountable domains, but $y$ is also an arbitrary real number.  We need to reformulate this so that we only need to consider rational values of $y$; we can't simply restrict the quantifier to rationals, though, since it could well be that the zero itself actually occurs at a real.

We need to make the statement more quantitative; the trick is to start in the right place.  To say that $y=x_i$ is equivalent to saying that for every $\delta$, $|y-x_i|<\delta$ and similarly for $g(y)=0$.  So we can rephrase this as:
\[\forall g,h,n,x_1,\ldots,x_n\exists y\left[\left(\left[\forall\zeta>0\ |g(y)|<\zeta\right]\rightarrow\exists i\leq n\forall\delta>0 \ |y-x_i|<\delta\right)\rightarrow\mathrm{rest}(g,h)\right].\]
We can now pull the quantifiers over $\zeta$ and $\delta$ out to the front; we actually get to choose which quantifier will be the outermost, and we should make this choice with the goal of minimizing the alternatations of quantifiers, and therefore the ultimate complexity of the statement:
\[\forall g,h,n,x_1,\ldots,x_n\exists y\forall\zeta>0\exists\delta>0\left[\left(|g(y)|<\zeta\rightarrow\exists i\leq n \ |y-x_i|<\delta\right)\rightarrow\mathrm{rest}(g,h)\right].\]
In this statement, it makes no difference whether we allow $y$ to range over the real numbers or the rationals, so we choose the rationals to make this a sentence of the allowed kind.  (This is really a reflection of the fact that we have chosen the correct atomic formulas---equality on reals is not computable, and therefore should not be treated as an atomic formula.)

Since $y$ now quantifies over an effectively compact domain (the rationals in the interval $[0,1]$), we can put it back inside the other quantifiers:
\[\forall g,h,n,x_1,\ldots,x_n\forall\zeta>0\exists\delta>0\left[\left(\forall y\ |g(y)|<\zeta\rightarrow\exists i\leq n \ |y-x_i|<\delta\right)\rightarrow\mathrm{rest}(g,h)\right].\]

To complete the process of finding the form of Lemma \ref{non_zero_red_quant}, we expand $\mathrm{rest}(g,h)$.  Initially, we might have:
\begin{align*}
  \forall g,h,\omega_g,\omega_hn,x_1,\ldots,x_n\forall\zeta>0\forall \gamma>0\bigg[
&\exists\delta>0\neg\left(\forall y\ |g(y)|<\zeta\rightarrow\exists i\leq n \ |y-x_i|<\delta\right)\\
&\vee \neg \mathrm{ucont}(g,\omega_g)\\
&\vee \neg\mathrm{ucont}(h,\omega_h)\\
&\vee |\int h\sgn g\ dx|< \gamma\\
&\vee \exists\lambda ||g-\lambda h||_1<||g||_1.
\end{align*}
As noted above, $\neg\mathrm{ucont}(f,\omega)$ has the $\exists y\phi(y)$ form, so this is a $\Pi_2$ formula.

The actual Lemma \ref{non_zero_red_quant} stated above includes some ad hoc simplification---replacing the particular witnesses $x_1,\ldots,x_n$ and $\delta$ with the measurable set $B=\bigcup_{i\leq n}(x_i-\delta,x_i+\delta)$, and replacing the continuity of $g$ and $h$ with just an $L_1$ bound on $h$.  These all weaken the assumptions, and therefore strengthen the overall theorem; they are discovered by observing what properties actually get used in the proof.  (Note that while the proof of the resulting statement goes through with the weakened assumptions---in particular, without the continuity of $g$---we actually used the continuity of $g$ to derive the equivalence of the modified statement with the original one.)

Most importantly, the syntactic manipulations have guided us into discovering that we should replace counting zeros with a bound on those $y$ with $|g(y)|<\zeta$.  What the proof-theoretic methods tells us is that this is guaranteed to be the right thing to look at: both that we will be able to obtain computable bounds for this lemma by using such a restriction, and that the rest of our proof will ultimately match up with this choice.  In particular, this is a \emph{local} transformation: while proving a quantitative version of Lemma \ref{non_zero_red_qual}, we don't need to look at the quantitative part of the other lemmas to know whether the proof will still work.

\subsection{More Complicated Sentences}

In the case of Jackson's Theorem, the statements of all our lemmas naturally unravel to $\Pi_2$ sentences, even if they didn't start that way.  In a more general proof (see below for references to some examples), that might not happen.  For instance, an intermediate step of the proof might be to show a statement of the form
\[\forall x\exists y\forall z\phi(x,y,z)\]
(if all quantifiers in $\phi$ are over effectively compact domains then this is called a $\Pi_3$ formula) and use this to prove 
\[\forall u\exists v\psi(u,v).\]
The proof might proceed something like this:
\begin{quote}
  Given $u$, calculate suitable values of $x$.  Then there is a $y$ such that $\forall z\phi(x,y,z)$, and we can use this $y$ to calculate a value $v$ such that $\psi(u,v)$ holds.
\end{quote}
However, since there might be no way to compute $y$ from $x$, this might not actually give us a computable algorithm.

The essential idea of the functional interpretation is to replace the non-quantitative formula $\forall x\exists y\forall z\phi(x,y,z)$ with a new quantitative analog which may be strictly weaker than the original version, but which is still sufficient to carry out the proof.  This is best illustrated by an example at the purely syntactic level: we will replace 
\[\forall x\in\mathbb{X}\,\exists y\in\mathbb{Y}\,\forall z\in\mathbb{Z}\,\phi(x,y,z)\]
 with
\[\forall x\in\mathbb{X}\,\forall Z\in\mathbb{Z}^{\mathbb{Y}}\,\exists y\in\mathbb{Y}\,\phi(x,y,Z(y)).\]
In other words, given $x$ and a \emph{function} $Z$, we can find a $y$ which works, not necessarily for every $z$, but at least for the particular value $Z(y)$ returned by the function.  Ultimately we will need to restrict $Z$ to be a computable function, though $\Pi_3$ statements are simple enough that it does not matter.  In fact, for statements this simple, the modified form is actually equivalent to the original:
\begin{lemma}
  $\forall x\exists y\forall z\phi(x,y,z)$ holds iff $\forall x\forall Z\exists y\phi(x,y,Z(y))$ holds.
\end{lemma}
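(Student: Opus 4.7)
The plan is to prove both directions separately. The forward direction is essentially instantiation, while the backward direction is a straightforward contrapositive argument whose only subtlety is that it uses a choice principle.

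For the forward direction, I would assume $\forall x\exists y\forall z\,\phi(x,y,z)$ and fix arbitrary $x$ and $Z\in\mathbb{Z}^{\mathbb{Y}}$. By the hypothesis there is some $y$ with $\forall z\,\phi(x,y,z)$; specializing $z$ to the particular value $Z(y)$ yields $\phi(x,y,Z(y))$, which is exactly the witness demanded by the modified form.

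For the backward direction, I would argue by contraposition. Assume $\forall x\exists y\forall z\,\phi(x,y,z)$ fails, so there is some $x_0$ such that for every $y\in\mathbb{Y}$ there exists $z\in\mathbb{Z}$ with $\neg\phi(x_0,y,z)$. Choosing such a $z$ for each $y$ produces a function $Z_0\in\mathbb{Z}^{\mathbb{Y}}$ with $\neg\phi(x_0,y,Z_0(y))$ for every $y$, contradicting $\forall x\forall Z\exists y\,\phi(x,y,Z(y))$. The main (very mild) obstacle here is that the construction of $Z_0$ is a use of the axiom of choice; in a formal constructive setting one would have to be careful about this, but at the informal level of the excerpt it is unproblematic, and it is precisely this noneffective step whose later computability-theoretic restriction (requiring $Z$ to be computable) will matter when the lemma is extended past $\Pi_3$.

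Since both directions are short, I would present them in a single two-paragraph proof without introducing auxiliary notation, noting in passing that the forward direction is constructive while the backward direction relies on choice.
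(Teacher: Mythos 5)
Your proof is correct and takes essentially the same route as the paper: the forward direction instantiates $z$ to $Z(y)$, and the backward direction argues by contrapositive, assembling the counterexample function $Z$ pointwise from witnesses to $\exists z\,\neg\phi(x,y,z)$. Your remark on choice and the non-constructivity of the backward direction also parallels the discussion the paper gives immediately after its proof.
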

\begin{proof}
  The left to right direction is obvious: if $\forall x\exists y\forall z\phi(x,y,z)$ then, given $x$, we let $y$ be the corresponding witness, and then $\forall z\phi(x,y,z)$ holds, so in particular, $\phi(x,y,Z(y))$.

  For the other direction, we show the contrapositive.  Suppose $\exists x\forall y\exists z\neg\phi(x,y,z)$.  Take some value $x$ such that $\forall y\exists z\neg\phi(x,y,z)$, and consider the function $Z$ which, given $y$, choose a value $Z(y)$ such that $\neg\phi(x,y,Z(y))$.  Then $x,Z$ form a counterexample to $\forall x\forall Z\exists y\phi(x,y,Z(y))$.
\end{proof}
Notice that in the left to right direction, our argument was constructive, in the sense that, given a value of $y$ which worked on the left, we obtained a value which worked on the right.  The right to left direction, however, was not constructive---it was a proof by contradiction.  Knowing a particular value of $y$ satisfying the formula of right, or even knowing a general method for finding, from each $x$ and $Z$, a value of $y$ is not enough to find the single value of $y$ which works on the left.

The statement 
\[\forall x\forall Z\exists y\phi(x,y,Z(y))\]
\emph{is} $\Pi_2$, so we expect to be able to concretely calculate $y$ from $x$ and $Z$.  More surprising is that when we have a proof of a $\Pi_2$ statement $\forall u\exists v\psi(u,v)$ from $\forall x\exists y\forall z\phi(x,y,z)$, we also have a proof of $\forall u\exists v\psi(u,v)$ from $\forall x\forall Z\exists y\phi(x,y,Z(y))$, and the bounds on $v$ (as a function of $u$) depend only on the bounds on $y$ as a function of $x$ and $Z$.  In other words, the statement $\forall x\forall Z\exists y\phi(x,y,Z(y))$ captures all the computable information present in the original statement.

To handle even more complicated sentences, with yet more alternations of quantifiers, we need not only functions, but \emph{functionals}---operations which map functions to functions, and then operations which map functionals to functionals, and so on.  In order to keep ourselves to countable domains (and also to meet our goal of working with quantitative data), in general we need to restrict ourselves to computable functions.  If the domains in the original statement are all countable (and coded appropriately) then it makes sense to talk about computable functions from a countable domain to another countable domain, and there are only countably many such functions.  (The outer quantifier over an uncountable domain becomes an \emph{oracle}---we fix an object from an uncountable domain, but then all further discussion is computable relative to the use of that object fixed at the beginning.)

To each formula $\phi$ we will assign a new formula, $\phi^{ND}$ which will always have the form
\[\exists y\forall x\,\phi_D(x,y),\]
where $\phi_D$ will only have quantifiers over effectively compact domains.  The intention is that we will have a systematic method for converting proofs of $\phi$ into a particular choice of $y$ together with a proof that $\forall x\phi_D(x,y)$ holds.  The definitions of $\phi_D$ and $\phi^{ND}$ are given by induction on the form of $\phi$.

Remember that we are now restricting ourselves to computable functions, so when we write $\mathbb{X}^{\mathbb{Y}}$ in the following definition, we mean the domain of computable functions from $\mathbb{Y}$ to $\mathbb{X}$.

When $\phi$ is atomic, $\phi_D=\phi$.  For the inductive case, suppose we already have $\phi_D(x,y)$ and $\psi_D(u,v)$ where $x$ has type $\mathbb{X}$, $y$ has type $\mathbb{Y}$, $u$ has type $\mathbb{U}$, and $v$ has type $\mathbb{V}$.  Then
\begin{enumerate}
\item $(\phi\wedge\psi)_D(x,u,y,v)$ is $\phi_D(x,y)\wedge\psi_D(u,v)$ and $(\phi\wedge\psi)^{ND}=\exists y,v\forall x,u\,\phi_D(x,y)\wedge\psi_D(u,v)$,
\item $(\neg\phi)_D(X,y)$ is $\neg\phi_D(X(y),y)$ and $(\neg\phi)^{ND}$ is $\exists X\in\mathbb{X}^{\mathbb{Y}}\forall y\neg\phi_D(X(y),y)$,
\item $(\phi\rightarrow\psi)_D(X,V,y,u)$ is $\phi_D(X(y,u),y)\rightarrow\psi_D(u,V(y))$ and $(\phi\rightarrow\psi)^{ND}$ is 
\[\exists X\in\mathbb{X}^{\mathbb{Y}\times\mathbb{U}}\exists V\in\mathbb{V}^{\mathbb{Y}}\forall y,u(\phi_D(X(y,u),y)\rightarrow\psi_D(u,V(y)),\]
\item $(\forall z\in S\,\phi)_D(x,Y,z)$ is $\phi_D(x,Y(z),z)$ and $(\forall z\in S\,\phi)^{ND}$ is $\exists Y\forall x,z\phi_D(x,Y(z),z)$.
\end{enumerate}
Instead of defining cases for $\vee$ and $\exists$, we derive them using the de Morgan laws: $(\exists z\phi)^{ND}$ is $(\neg\forall z\neg\phi)^{ND}$ and $(\phi\vee\psi)^{ND}$ is $(\neg(\neg\phi\wedge\neg\psi))^{ND}$.

It is generally useful to think of $\phi^{ND}$ as saying ``$y$ is a demonstration that $\phi$ is true'' where a purely mechanical verification that a statement is true would require checking that $\phi_D(x,y)$ holds for all possible values of $x$.  Then some of the inductive cases are easy to interpret: for instance, if $y$ demonstrates $\phi$ holds and $v$ demonstrates that $\psi$ holds then the pair $v,y$ demonstrates $\phi\wedge\psi$.

A more interesting case is implication; what does it mean to have an explicit demonstration for $\phi\rightarrow\psi$?  It means we should have an algorithm which converts demonstrations of $\phi$ into demonstrations of $\psi$.  This is the function $V$: given a $y$ which demonstrates $\phi$, $V(y)$ is a demonstration of $\psi$.  The complication is that when $y$ fails to be a demonstration of $\phi$---there exists some $x$ with $\neg\phi_D(x,y)$---we don't want $V(y)$ to be arbitrary.  Instead we want to have the property that not only can we determine $V(y)$ from $y$, but when $u$ is a counter-example to $V(y)$, we can find a counter-example $X(y,u)$ to $y$.  Because it is the most important case, it is worth dwelling on this point: the interpretation of $\rightarrow$ here requires that we have an algorithm which converts \emph{any} $y$ into a value $V(y)$, without needing to know whether $y$ actually works or not---the commitment is that \emph{if} $y$ works then $V(y)$ works as well, and furthermore that we know how to translate a counter-example to $V(y)$ into a counter-example to $y$ itself.  This imposes something like a continuity requirement on $V$: if $y$ is ``almost right'', in the sense that counter-examples are rare (for instance, the only counter-examples are very large values of $x$) then $V(y)$ should be ``almost right'' as well.

We would like to add the following clause which would cause effectively compact domains to behave more like finite sets:
\begin{quote}
  When $S$ is a effectively compact domain, $(\forall z\in S\,\phi)_D(x,Y,z)$ is $\exists y\leq Y\,\phi_D(x,y,z)$ and $(\forall z\in S\,\phi)^{ND}$ is $\exists Y\forall x\forall z\in S\exists y\leq Y\,\phi_D(x,y,z)$.
\end{quote}
This says, roughly, that when we quantify over a effectively compact domain, we obtain a bound on all witnesses needed for all elements of that domain.  Making this idea work turns out to be a bit difficult---this is the subject of the \emph{monotone} \cite{MR1195271} and \emph{bounded} \cite{MR2156133} functional interpretations.  However for most practical applications, the additional effort is justified.

There are two properties that are needed to make the $ND$ translation useful.  The first is that if we have a proof of $\phi$ in a reasonable theory (for instance, Peano arithmetic) then we have a particular value of $y$ together with a proof that $\forall x\phi_D(x,y)$ actually holds.  To prove this, we could pin down a particular formal system of axioms and inference rules, and then prove that the translation of each axiom is justified, and also that each inference rule preserves being justified.  It is essential that the proof itself is completely explicit---given a formal proof $\phi$, there is an explicit procedure for converting it into a proof of $\phi^{ND}$ together with the associated algorithm.  It is convenient that the algorithm is \emph{short}: a single line in the original proof generally translates to a fixed, small number of lines in the new proof (the exact values, of course, depending on the particular formal system).

The second important property of the $ND$ translation is that it does not alter $\Pi_2$ formulas:
\begin{lemma}
When $\phi$ is quantifier-free, $(\forall x\exists y\phi(x,y))^{ND}$ is equivalent to the existence of a computable function $Y$ such that $\forall x\phi(x,Y(x))$.
\end{lemma}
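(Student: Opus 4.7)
The plan is to unfold the definition of $\phi^{ND}$ step by step, following the inductive clauses on the syntactic structure $\forall x\exists y\phi(x,y)$. Since $\exists$ is not a primitive of the translation, I first rewrite $\exists y\,\phi(x,y)$ as $\neg\forall y\,\neg\phi(x,y)$, and then apply the clauses for $\neg$, $\forall$, $\neg$, and $\forall$ in that order. The book-keeping of which tuples play the role of witnesses and which play the role of counter-examples is the entire content of the argument; the main obstacle is simply tracking those types as the $\neg$ clause flips them at each step.

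First I would observe that when $\phi$ is quantifier-free, the base case of the definition gives $\phi_D = \phi$ with both the witness tuple and counter-example tuple empty. Applying the $\neg$ clause once then yields $(\neg\phi)_D = \neg\phi$, with witness and counter-example tuples still empty (formally a function from the empty type to the empty type, which is trivial). Applying the $\forall y$ clause next introduces $y$ as a new counter-example and a trivial function as witness, giving $(\forall y\,\neg\phi(x,y))_D(y) = \neg\phi(x,y)$.

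Now the crucial step: apply the $\neg$ clause to $\psi := \forall y\,\neg\phi(x,y)$. Here $\psi_D(y,\emptyset) = \neg\phi(x,y)$ has counter-example $y$ and trivial witness, so by the definition
\[(\neg\psi)_D(Y,\emptyset) \;=\; \neg\psi_D(Y(\emptyset),\emptyset) \;=\; \phi(x,Y(\emptyset)),\]
where the new witness $Y$ is a map from the trivial type to the type of $y$, i.e.\ simply an element of the type of $y$. Identifying $Y$ with that element, we obtain $(\exists y\,\phi(x,y))^{ND} = \exists Y\,\phi(x,Y)$, as one would hope. Finally, applying the $\forall x$ clause promotes $Y$ to a function $Y'$ of $x$ and introduces $x$ as a counter-example:
\[(\forall x\exists y\,\phi(x,y))_D(x, Y') \;=\; \phi(x, Y'(x)),\]
so
\[(\forall x\exists y\,\phi(x,y))^{ND} \;=\; \exists Y'\,\forall x\,\phi(x, Y'(x)).\]

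I would close by invoking the standing convention that the function-type quantifier $\exists Y'$ ranges over computable functions from the type of $x$ to the type of $y$; this is precisely the statement of the lemma. The anticipated pitfall is the penultimate step, where the $\neg$ clause is applied to a formula whose witness tuple is empty: one must be comfortable with the convention that functions out of an empty type are trivial and that functions into an empty type would be vacuous, so that the types read off the rules collapse to the expected ones. Once this is absorbed, the computation is entirely mechanical.
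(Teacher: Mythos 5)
Your unfolding is correct and is essentially the paper's own argument: the paper's proof just says to plug in the defining clauses, which you have carried out in more detail by rewriting $\exists y\,\phi$ as $\neg\forall y\,\neg\phi$ and tracking the degenerate (empty) witness and counter-example types until the form $\exists Y\,\forall x\,\phi(x,Y(x))$ with $Y$ computable appears. The only small point the paper adds is that for quantifier-free (rather than atomic) $\phi$ any occurrences of $\vee$ are first replaced by $\neg(\neg\cdots\wedge\neg\cdots)$, which is harmless by de Morgan.
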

\begin{proof}
Since $\phi$ has no quantifiers, $\phi^{ND}$ is actually the formula $\phi$ except that $\vee$ has been replaced by $
  \neg(\neg\cdots\wedge\neg\cdots)$, which is equivalent by the de Morgan law.  Then simply plugging in the definitions above, $(\forall x\exists y\phi(x,y))^{ND}$ is equivalent to
\[\exists Y\forall x\phi(x,Y(x))\]
where $Y$ is a computable functionl.
\end{proof}

This, of course, is exactly what we want: given a proof of $\forall x\exists y\phi(x,y)$, we translate it, step by step, to an explicit proof of $(\forall x\exists y\phi(x,y))^{ND}$, which means we have an actual function $Y$ such that $\forall x\phi(x,Y(x))$.  However the intermediate steps of the proof have been changed to bring out explicit information which may have been hidden.

If we used the monotone functional interpretation in place of the $ND$ translation described above, we would get a stronger result; essentially
\begin{lemma}
  When $\phi$ only has quantifiers over effectively compact domains, $(\forall x\exists y\phi(x,y))^{ND}$ is equivalent to the existence of a computable function $Y$ such that $\forall x\exists y\leq Y(x)\,\phi(x,y)$.
\end{lemma}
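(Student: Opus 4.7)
The approach is structural induction on $\phi$, extending the preceding lemma (which handled quantifier-free $\phi$). The key new ingredient is the monotone interpretation's clause for quantifiers over effectively compact domains: $(\forall z\in S\,\phi)^{ND}$ becomes $\exists Y\,\forall x\,\forall z\in S\,\exists y\leq Y\,\phi_D(x,y,z)$, so the witness provided by $\exists y$ is given as a bound rather than an exact value, and this bound is uniform in $z\in S$. Because $S$ is a countable dense subset of a compact set and the atomic formulas are continuous in $z$, any such bounded search can be carried out effectively, so no new uncomputability is introduced when $z$ ranges over $S$.

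I would prove the stronger inductive claim that, for $\phi$ whose quantifiers all lie over effectively compact domains, $\phi^{ND}$ is equivalent to $\exists W\,\forall V\,\phi^{\ast}(W,V)$, where $W$ is a tuple of computable majorants and $\phi^{\ast}$ is a formula all of whose remaining quantifiers are bounded by $W$ and hence decidable by finite search (using the continuity of atomic formulas and the effective compactness of each quantification domain). The base case is atomic. For the inductive step I would check each clause of the translation in turn: conjunction pairs bounds; bounded universal quantifier $\forall z\in S$ replaces a $z$-indexed family of bounds by their pointwise supremum over $S$, which is finite by effective compactness; and the cases of $\neg$ and $\rightarrow$ produce functionals whose domains are again effectively compact, so their majorants are still computable. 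Once this reduction is in place, applying the translation to $\forall x\,\exists y\,\phi(x,y)$ pulls the outer universal quantifier out as an argument and turns the outer existential into a bound $Y(x)$ on $y$, yielding $\exists Y\,\forall x\,\exists y\leq Y(x)\,\phi(x,y)$ as claimed; the converse direction is immediate, since any such $Y$ directly provides the required witnesses for $\phi^{ND}$.

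The main obstacle will be the cases of $\neg$ and $\rightarrow$, where the translation replaces witnesses by functionals and then by functionals-on-functionals, so naive induction produces witnesses of ever-higher type. Here I expect to rely on the majorization principle underlying the monotone interpretation: a monotone bound on a lower-type witness lifts to a majorant of the functional built from it, so finitely many bounds indexed by an effectively compact parameter can always be collapsed to a single numerical majorant by taking a pointwise maximum. This is exactly what justifies replacing the exact-witness form $\exists y\,\phi$ by the bounded form $\exists y\leq Y(x)\,\phi$ that appears in the statement.
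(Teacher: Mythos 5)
You should first be aware that the paper does not actually prove this lemma: it is stated with the hedge ``essentially,'' as what one \emph{would} obtain by using the monotone functional interpretation in place of the $ND$ translation defined in the text, and the paper immediately flags that making precise what $y\leq Y(x)$ means when $y$ ranges over a domain of complicated functions ``is a bit tricky,'' deferring to the monotone \cite{MR1195271} and bounded \cite{MR2156133} interpretations. So there is no paper proof to match your argument against; the question is whether your sketch supplies the missing content. In outline your plan is the right one---strengthen the induction hypothesis to ``all remaining quantifiers are hereditarily bounded by computable majorants'' and push it through the clauses of the translation---but the sketch has a genuine gap exactly where the paper warns the difficulty lies.

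The gap is in the $\neg$ and $\rightarrow$ cases. These clauses raise the type of the witnesses (they produce $X\in\mathbb{X}^{\mathbb{Y}}$, $X\in\mathbb{X}^{\mathbb{Y}\times\mathbb{U}}$, $V\in\mathbb{V}^{\mathbb{Y}}$), so your inductive invariant requires, first, a definition of what ``majorant'' means at all finite types, and second, a verification that each clause of the translation sends majorants of the subformulas' witnesses to a majorant of the new, higher-type witness. That is precisely the content of the monotone interpretation (Howard-style majorizability and its preservation lemmas), and at this point you simply ``expect to rely on the majorization principle underlying the monotone interpretation''---which is to assume the theorem being proved. The concrete collapse step you offer is also not right as stated: an effectively compact domain is an \emph{infinite} countable dense subset of a compact set, so there are not ``finitely many bounds indexed by an effectively compact parameter''; the supremum over $z\in S$ is an infinite supremum whose finiteness and computability need the continuity moduli of the atomic formulas, and at function and functional types a ``pointwise maximum'' is not by itself a majorant in the hereditary sense needed for the next $\neg$ or $\rightarrow$ step. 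Similarly, ``decidable by finite search'' overstates what atomic formulas of the form $f(x)<\epsilon$ give you (only verification up to the moduli, not decidability), though this affects the converse direction less since under the monotone reading the bound $Y$ itself is the required witness. In short, your proposal correctly reconstructs the \emph{strategy} the paper gestures at, but the lemma's actual content---the majorizability framework and the $\neg/\rightarrow$ cases---remains outsourced to the very machinery the paper cites rather than proved.
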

Making precise what $y\leq Y(x)$ means when $y$ comes from a domain of complicated functions, however, is a bit tricky.

We've described this as a process applied to completely formal proofs.  This is not a very practical approach; real proofs, as written in journals, are far from being strings of logical formulas.  If we had to first translate those proofs into formal strings of logical inferences, that alone would be a huge process, and empirical experience suggests that the proof becomes several times as long when reduced to a completely formal proof\footnote{The ratio of length of the formalized proof to the informal one is known as the \emph{de Bruijn factor}, and a value of $4$ seems to be common\cite{MR2463993}.}.  What makes the functional interpretation useful for actual substantial proofs is that it can be applied relatively directly to journal proofs.

The functional interpretation is a local transformation: it tells us how to translate each individual statement.  So we can translate particular statements---say, the statements of individual lemmas---and then fill in the proofs by hand, knowing that there is a proof (and one close to the original).  If this proves too difficult, we can simply break the proof in half at some convenient point, translate the statement of the halfway point, and prove two shorter lemmas.

We need the functional interpretation to complete our goal that extraction of bounds depends on the syntactic form of the conclusion.  No matter what the intermediate steps look like, we can use the $ND$ translation to convert every step of the proof into an argument with explicit quantitative bounds.







\section{Some Applications of the Functional Interpretation}\label{sec:applications}

\subsection{Fixed Point Theorems}

One place where non-constructive proofs occur naturally is metric spaces (often special kinds of metric spaces, like Banach spaces or $C^*$-algebras), where compactness is a powerful, frequently used tool.  The functional interpretation has been used extensively to extract quantitative information from such proofs \cite{MR1241250,MR1248130,MR1948057,MR2098088,MR2373327,MR2668247,MR2578604,MR2321766,MR2273533,MR2144170,MR2207130,MR2520390,MR2583826,MR2444453,MR2309276,MR2302929}.

We'll consider just one family of examples, fixed point theorems, with an eye towards the importance of the syntactic form of statements.  A typical fixed point theorem is Edelstein's fixed point theorem \cite{MR0133102}:
\begin{theorem}
Let $(K,d)$ be a compact metric space and $f:K\rightarrow K$ be \emph{contractive}---for any $x,y\in K$, $d(f(x),f(y))<d(x,y)$.  Then for any $x$, the sequence given by $x_0=x$, $x_{k+1}=f(x_k)$ converges to a unique $c$ such that $f(c)=c$.
\end{theorem}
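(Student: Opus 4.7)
The plan is to proceed by a standard compactness-plus-monotonicity argument, using as the main lever the continuous real-valued function $g(y) = d(f(y), y)$ on $K$.

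First I would establish the key monotonicity: by contractivity, $d(x_{k+1}, x_k) = d(f(x_k), f(x_{k-1})) < d(x_k, x_{k-1})$, so the sequence $(d(x_{k+1}, x_k))_{k}$ is strictly decreasing and bounded below by $0$; let $\ell \geq 0$ be its limit. Equivalently, $g(x_k) \searrow \ell$. Next I would use compactness of $K$ to extract a convergent subsequence $x_{n_i} \to c$ for some $c \in K$. Contractivity implies $f$ is (Lipschitz, hence) continuous, so $x_{n_i+1} = f(x_{n_i}) \to f(c)$, and continuity of $d$ gives $g(c) = \lim_i g(x_{n_i}) = \ell$.

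The crux is showing $f(c) = c$. The idea is that the subsequence $x_{n_i+1}$ converges to $f(c)$, and since $g(x_{n_i+1}) \to \ell$ as well, continuity of $g$ forces $g(f(c)) = \ell = g(c)$. On the other hand, if $f(c) \neq c$ then contractivity yields
\[g(f(c)) = d(f(f(c)), f(c)) < d(f(c), c) = g(c),\]
a contradiction. So $f(c) = c$. Uniqueness is then an immediate one-line application of the contractive inequality: two distinct fixed points $c_1 \neq c_2$ would satisfy $d(c_1,c_2) = d(f(c_1),f(c_2)) < d(c_1,c_2)$.

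The remaining step is to upgrade subsequential convergence to full convergence of the original sequence. For this I would run a parallel monotonicity argument on the sequence $(d(x_k, c))$: since $d(x_{k+1},c) = d(f(x_k),f(c)) < d(x_k,c)$ whenever $x_k \neq c$, this sequence decreases to some limit $m \geq 0$. Extracting a further convergent subsequence $x_{m_j} \to c'$ with $d(c',c) = m$ and applying the same contractive-limit trick as above (comparing $d(x_{m_j+1},c) \to d(f(c'),c) < d(c',c) = m$ against the requirement that it also tends to $m$) forces $m=0$. I expect the main obstacle in this proof to be the $f(c) = c$ step, which is where the interplay between continuity of $g$, the monotone convergence of $g(x_k)$, and the strict inequality in contractivity must all be combined carefully; everything else is essentially bookkeeping with these same ingredients.
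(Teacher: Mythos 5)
Your argument is correct, and it is essentially the standard proof of Edelstein's theorem: monotone decrease of $g(x_k)=d(f(x_k),x_k)$, a convergent subsequence by compactness, continuity of $f$ and $d$ to force $g(c)=\ell$ and then $\ell=0$ via the strict contractive inequality, uniqueness in one line, and monotonicity of $d(x_k,c)$ to upgrade to full convergence. Note, though, that the paper never proves this statement---it appears in the applications section as a cited classical result (Edelstein), with the discussion focused instead on its quantitative content (moduli of contractivity and rates of convergence to $c$)---so there is no in-paper argument to compare against. Two small points on your write-up: the strict decrease of $d(x_{k+1},x_k)$ and of $d(x_k,c)$ needs the trivial caveat that the inequality is only claimed for distinct arguments, and if ever $x_{k+1}=x_k$ (or $x_k=c$) the sequence is eventually constant and the theorem is immediate; and your final step can be streamlined: once you know $f(c)=c$ and that some subsequence $x_{n_i}\to c$, the non-increasing sequence $d(x_k,c)$ has a subsequence tending to $0$ and hence tends to $0$ itself, so the second subsequence extraction and the ``contractive-limit trick'' at $c'$ are unnecessary. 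It is also worth observing, in the spirit of the paper, that the two compactness extractions are exactly the non-constructive steps that block any computable rate of convergence from this proof as written, which is why the paper's discussion passes through a modulus of uniqueness and a modulus of contractivity to recover explicit bounds.
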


In general, the statement that a sequence converges is not $\Pi_2$---it has the form
\[\forall\epsilon\exists m\forall n(n\geq m\rightarrow d(x_n,x_{n+1})<\epsilon).\]

For Edelstein's theorem, contractivity lets us make the following observation: once we have $d(x_m,c)<\epsilon/2$, we also have $d(x_n,c)<\epsilon/2$ for all $n\geq m$, and in particular, $d(x_n,x_{n+1})<\epsilon$ for $n\geq m$.  If we know what $c$ is, we see that any $m$ with $d(x_m,c)<\epsilon/2$ is an $m$ we are looking for.  The statement
\[\forall\epsilon\exists m\ d(x_m,c)<\epsilon\]
is $\Pi_2$, so we can expect to find explicit bounds for this.  While we can't expect to actually use $c$ when finding bounds, the choice of $c$ is unique, and it has an explicit modulus of uniqueness.  By methods similar to the ones in the effective proof of Jackson's theorem, Kohlenbach showed \cite{MR1241250} that from the modulus of uniqueness, one can construct a modulus of continuity---that is, one can figure out how $c$ varies as $x_0$ varies.  Putting these facts together---the rate of convergence in the original statement depends on the convergence of $x_m$ to $c$, where $c$ is continuous in $x_0$---is enough to find a computable function $N(\epsilon)$ so that for each $x$, each $\epsilon$, and each $n\geq N(\epsilon)$, $d(x_n,c)<\epsilon$ \cite{MR2054493}.  The function $N$ depends on the diameter of $K$ (that is, $\sup_{x,y\in K}d(x,y)$) and a \emph{modulus of contractivity} of $f$---a function $\eta(\epsilon)$ such that $d(x,y)>\epsilon$ implies $d(f(x),f(y))+\eta(\epsilon)<d(x,y)$.

To see that these considerations are really necessary, one can consider the Krasnoselski fixed point theorem:
\begin{theorem}[\cite{MR0068119}]
  Let $K$ be a convex, closed and bounded set in a uniformly convex Banach space $(X,||\cdot||)$, $f$ a mapping of $K$ into a compact subset of $K$ such that $f$ is \emph{non-expansive}---that is, $||f(x)-f(y)||\leq ||x-y||$ for all $x,y$.  Then for every $x_0\in K$, the sequence given by
\[x_{k+1}=\frac{x_k+f(x_k)}{2}\]
converges to a $p$ such that $f(p)=p$.
\end{theorem}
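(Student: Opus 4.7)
The plan is to carry out the standard four-step analysis of the Krasnoselski iteration: (i) establish that the displacements $d_k = \|x_{k+1} - x_k\|$ are monotonically non-increasing and hence converge to some $d \geq 0$; (ii) prove asymptotic regularity, namely $d = 0$; (iii) use compactness of $\overline{f(K)}$ to extract a subsequential fixed-point limit; and (iv) upgrade subsequential convergence to convergence of the whole sequence. I also need to verify at the outset that the iteration stays in $K$, which is immediate: $K$ is convex, $x_0 \in K$, $f(x_k) \in K$ by hypothesis, so $x_{k+1} = \tfrac{1}{2}(x_k + f(x_k)) \in K$ inductively.

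Step (i) is purely algebraic. Using $x_{k+1} - x_k = \tfrac{1}{2}\bigl((x_k - x_{k-1}) + (f(x_k) - f(x_{k-1}))\bigr)$ together with non-expansiveness,
\[
d_k \leq \tfrac{1}{2}\|x_k - x_{k-1}\| + \tfrac{1}{2}\|f(x_k) - f(x_{k-1})\| \leq d_{k-1},
\]
so the sequence $d_k$ is non-increasing and converges to some $d \geq 0$. Note also the identity $d_k = \tfrac{1}{2}\|x_k - f(x_k)\|$, so asymptotic regularity is equivalent to $d = 0$.

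The hard part is step (ii): this is the only place the geometry of $X$ enters, and it is precisely where uniform convexity is essential. The intuition is that $x_{k+1}$ is the midpoint of the segment $[x_k, f(x_k)]$ of length $2d_k$; by non-expansiveness the later points $f(x_j)$ for $j > k$ stay near $f(x_k)$, so if $\|x_j - f(x_j)\|$ remained bounded below by some positive $2d$, the uniform convexity modulus $\delta(\cdot)$ would force each successive midpoint to be pulled strictly inside, driving $d_j$ strictly below $d$, a contradiction. Making this rigorous requires a careful quantitative estimate combining the non-expansive control on $f(x_j) - f(x_k)$, the known lengths $2d_j$, and the midpoint inequality from uniform convexity; essentially one is reproving the Krasnoselski--Groetsch (later Ishikawa) asymptotic-regularity theorem, which I would either cite or reproduce.

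With $\|x_k - f(x_k)\| \to 0$ in hand, the rest is routine. Compactness of $\overline{f(K)}$ produces a subsequence $f(x_{k_j}) \to p$ for some $p \in K$ (using that $K$ is closed); since $\|x_{k_j} - f(x_{k_j})\| \to 0$ we also get $x_{k_j} \to p$, and continuity of $f$ (from non-expansiveness) gives $f(p) = \lim f(x_{k_j}) = p$. For step (iv), observe that for any fixed point $q$ of $f$,
\[
\|x_{k+1} - q\| \leq \tfrac{1}{2}\|x_k - q\| + \tfrac{1}{2}\|f(x_k) - f(q)\| \leq \|x_k - q\|,
\]
so $\|x_k - p\|$ is non-increasing; combined with the subsequential convergence $x_{k_j} \to p$, this forces $x_k \to p$ for the full sequence.
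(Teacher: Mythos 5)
First, a point of comparison: the paper does not prove this theorem at all---it is quoted from Krasnoselski's paper \cite{MR0068119} as a springboard for discussing asymptotic regularity versus full convergence---so your proposal can only be measured against the standard literature argument. Your skeleton is the right one: (i) the displacements $d_k=\frac{1}{2}\|x_k-f(x_k)\|$ are non-increasing, (iii) compactness of $\overline{f(K)}$ plus asymptotic regularity yields a subsequential limit which is a fixed point, and (iv) the Fej\'er-type monotonicity $\|x_{k+1}-q\|\leq\|x_k-q\|$ for any fixed point $q$ upgrades subsequential to full convergence. Steps (i), (iii), (iv) as you state them are correct and complete.

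The genuine gap is step (ii), which is the actual content of the theorem, and your proposal does not prove it: you give a heuristic and then defer to ``cite or reproduce'' a known asymptotic-regularity theorem. Worse, the heuristic as written would not survive being made rigorous: non-expansiveness does not make $f(x_j)$ ``stay near'' $f(x_k)$ for $j>k$; it only gives $\|f(x_j)-f(x_k)\|\leq\|x_j-x_k\|$, which may be as large as the diameter of $K$. The standard way to close the gap under your hypotheses is to obtain a fixed point \emph{first}: $f$ is continuous and maps the closed convex set $K$ into a compact subset of $K$, so Schauder's theorem gives some $q\in K$ with $f(q)=q$ (alternatively Browder--G\"ohde--Kirk, using uniform convexity and boundedness). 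Then $r_k=\|x_k-q\|$ is non-increasing by your step (iv) inequality, and $x_{k+1}-q=\frac{1}{2}\bigl((x_k-q)+(f(x_k)-q)\bigr)$ is the midpoint of two vectors of norm at most $r_k$ whose difference is $x_k-f(x_k)$; if $\|x_k-f(x_k)\|\geq c>0$ for all $k$, uniform convexity gives $r_{k+1}\leq\bigl(1-\delta(c/r_0)\bigr)r_k\rightarrow 0$, so $x_k\rightarrow q$ and hence $\|x_k-f(x_k)\|\rightarrow\|q-f(q)\|=0$, a contradiction. Note this route requires the existence of a fixed point \emph{before} asymptotic regularity, a dependency absent from your outline; some such argument (or Ishikawa's later proof, which dispenses with uniform convexity) must actually be supplied, since as it stands the central step of your proof is missing.
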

Again this is a convergence statement, so not in a $\Pi_2$ form.  Unlike Edelstein's fixed point theorem, this is really unavoidable---Kohlenbach gives an example \cite{MR1893075} showing that there can be no algorithm finding bounds from $x_0$ and $f$.  However a similar idea is to observe that the quantity $||x_n-f(x_n)||$ is decreasing, and therefore given $\epsilon$ one can hope to find an $n$ such that $||x_n-f(x_n)||\leq\epsilon$.  (This is a bound on the \emph{asymptotic regularity} of the sequence $(x_n)$.)  Asymptotic regularity \emph{is} $\Pi_2$, and it is therefore unsurprising that many explicit bounds are known, both by analytic methods \cite{MR0079667,MR1093989} and by use of the functional interpretation \cite{MR1893075}.

\subsection{Ultraproducts and Similar Constructions}

Another common source of non-constructive proofs is the compactness of first-order logic---that is, nonstandard analysis, or, more generally, the use of ultraproducts\footnote{This general idea has been rediscovered a number of times, especially various special cases that don't depend as heavily on the general logical framework, and is therefore known by a number of names: Banach limits, the Furstenberg correspondence, vague convergence, and graphons.  These are not all exactly the same notion, but when our concern is the extraction of computable bounds, the differences are not significant.}.  We begin with a sequence of models $\mathfrak{M}_N$ of a language $\mathcal{L}$ and combine them into a single model $\mathfrak{M}$ with the property that $\mathfrak{M}\vDash\phi$ for a formula $\phi$ in $\mathcal{L}$ if and only if for ``most'' $N$, $\mathfrak{M}_N\vDash\phi$.  If we prove that $\phi$ must hold in $\mathfrak{M}$, we can conclude that it holds in $\mathfrak{M}_N$ for ``most'', and certainly for infinitely many $N$.  (See \cite{MR3141811} for discussion of the reverse argument, that ultraproducts can be used to show the existence of uniform bounds.)

A typical example is the ergodic-theoretic proof of Szemer\'edi's Theorem \cite{furstenberg77,furstenberg:MR670131}:
\begin{theorem}
For every $\epsilon>0$ and every $k$, there is an $N$ such that if $A\subseteq[1,N]$ with $|A|\geq\epsilon N$ then there are $a,d$ with
\[\{a,a+d,a+2d,\ldots,a+(k-1)d\}\subseteq A.\]
\end{theorem}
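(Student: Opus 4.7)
The plan is to follow the ergodic-theoretic strategy of Furstenberg alluded to in the preceding discussion, which is quintessentially non-constructive and illustrates the phenomenon the section is describing. I would argue by contradiction: suppose the theorem fails, so there exist $\epsilon>0$ and $k$ such that for every $N$ one can exhibit some $A_N\subseteq[1,N]$ with $|A_N|\geq\epsilon N$ containing no arithmetic progression of length $k$.

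The first step, the correspondence principle, passes from the sequence $(A_N)$ to a single measure-preserving dynamical system $(X,\mathcal{B},\mu,T)$ together with a measurable set $A\subseteq X$ of measure $\mu(A)\geq\epsilon$. Concretely, one may identify each $A_N$ with a function on $\mathbb{Z}/N\mathbb{Z}$, randomly translate it to obtain a shift-invariant probability measure on $\{0,1\}^{\mathbb{Z}}$, and extract a weak-$*$ cluster point (or, equivalently, form an ultraproduct and push forward the Loeb measure). The assumption that no $A_N$ contains a $k$-AP passes to the limit to yield
\[\mu\bigl(A\cap T^{-d}A\cap T^{-2d}A\cap\cdots\cap T^{-(k-1)d}A\bigr)=0\qquad\text{for every }d\geq 1.\]

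The second step is to quote Furstenberg's multiple recurrence theorem: for any measure-preserving system and any $A\in\mathcal{B}$ with $\mu(A)>0$, there exists some $d\geq 1$ for which the above multiple intersection has strictly positive measure. Applied to the system just constructed this is an immediate contradiction, and the theorem is proved.

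The main obstacle is of course the proof of multiple recurrence itself. The case $k=2$ is just the Poincar\'e recurrence theorem, but for general $k$ one needs the full structure theory of measure-preserving transformations: Furstenberg's decomposition of an arbitrary system into a transfinite tower of compact and weakly mixing extensions over the trivial factor, together with an inductive lifting argument showing that $SZ$-ness (satisfying the recurrence conclusion) is preserved at each stage. This is exactly the kind of heavily non-$\Pi_2$ ingredient flagged in the preceding subsection: the correspondence step discards all quantitative data about the $A_N$ in exchange for a single infinitary object, and the structure theory is itself proved by iterated appeals to compactness and Zorn-style maximality. Consequently the argument, though short and transparent, supplies no explicit bound on $N$ as a function of $\epsilon$ and $k$, and extracting such a bound by the functional interpretation would require first reformulating the multiple recurrence theorem in a way that brings its hidden computational content into $\Pi_2$ form.
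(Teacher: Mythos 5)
Your proposal is correct and follows essentially the same route as the paper: argue by contradiction, pass from the hypothetical counterexamples $A_N$ to a single infinitary measure-preserving system via a correspondence principle (the paper does this with an ultraproduct of the finite models $([1,2N],A_N)$ equipped with counting relations, you via a weak-$*$ cluster point or equivalently a Loeb measure, which the paper itself notes is the same essential idea), and then invoke Furstenberg's multiple recurrence theorem, which both you and the paper treat as a cited black box rather than proving.
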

Note that this is $\Pi_2$---for every $\epsilon$ and $k$, there is an $N$; the quantifiers over $A$ and over $a,d$ are over finite sets.

If the statement isn't true, we may fix $\epsilon>0$ and $k$, and for each $N$ find an $A_N\subseteq[1,N]$ with $|A_N|\geq\epsilon N$ so that $a$ contains no arithmetic progression of length $k$.  We take the language $\mathcal{L}$ containing a unary relation symbol $\underline{A}$, a unary function $\underline{T}$, and for each formula $\phi(x)$ with only the displayed free variables and each $q\in\mathbb{Q}^{>0}$, a $0$-ary relation symbol $m_{\phi,q}$.  We interpret $([1,2N],A_N)$ as a model of this language by taking $A_N$ as the interpretation of $\underline{A}$, $x\mapsto x+1\mod 2N$ as the interpretation of $\underline{T}$, and
\[([1,N],A_N)\vDash m_{\phi,q}\Leftrightarrow |\{x\mid ([1,N],A_n)\vDash \phi(x)\}|\geq 2qN.\]
(The last clause is technical; it gives us some ability to talk about the measure of sets using formulas in the language.  This is a special case of more general approaches for considering measures in the context of first-order logic \cite{MR2505436,goldbring:_approx_logic_measure,MR819545}.)

We then work in the ultraproduct of these models, which we call $(X,A)$.  Observe that if we can prove, for some $d$, that the ultraproduct satisfies the formula
\[\exists x\, x\in \underline{A}\wedge T^dx\in\underline{A}\wedge\cdots \wedge T^{(k-1)d}x\in\underline{A}\]
then infinitely many of the finite models $([1,2N],A_N)$ satisfy this formula.  Take $N$ much larger than $d$ and let $a\in[1,2N]$ witness this fact.  Since $A_N\subseteq[1,N]$ and each $\underline{T}^{id}a\in A_N$, we must have $\underline{T}^{id}a=a+id$ for each $i<k$.  (We are going to a small amount of trouble here to prevent the case where the progression involves ``wrapping around'', since $T$ is interpreted as addition mod $2N$.)  Therefore $a,a+d,\ldots,a+(k-1)d$ is an arithmetic progression in $A_N$.  This gives the desired contradiction, since the $A_N$ were chosen to be sets with no arithmetic progressions.

Furstenberg gives a proof \cite{furstenberg77} that the ultraproduct satisfies this formula by way of interpreting the ultraproduct as dynamical system.  (He phrases his construction quite differently, but the essential idea is the same.)

If we want to give a constructive version of this proof, we face the following obstacle.  The ergodic theoretic argument involves statements about the measure---say, $\mu(A\cap TA)>0$.  Translated into a formula, this is
\[\exists q\in\mathbb{Q}^{>0} m_{Ax\wedge A(Tx),q}.\]
We need to distinguish between formulas in the precise, formally defined language $\mathcal{L}$ and formulas in the informal sense we used them in the previous sentence.  This formula, with its quantifier over $\mathbb{Q}^{>0}$, is a formula in the broader sense, but it isn't actually a formula in $\mathcal{L}$.  In particular, we don't automatically have that $(X,A)$ satisfies this formula exactly when most $([1,2N],A_N)$ satisfy this formula.

There is an obvious attempt at a solution: we could extend $\mathcal{L}$ to a bigger language $\mathcal{L}'$, with two sorts, where the second sort is intended to represent $\mathbb{Q}^{>0}$.  We would replace $m_{\phi,q}$ with $m_{\phi}(q)$, where $q$ will obviously range over the second sort.  Then we start with two-sorted models $([1,2N],A_N,\mathbb{Q}^{>0})$.  The problem is that when we take the ultraproduct, we also have to take the ultraproduct in the second sort, so we get a model $(X,A,(\mathbb{Q}^{>0})^*)$, where $(\mathbb{Q}^{>0})^*$  is the (positive) \emph{nonstandard} rational numbers---which includes infinitesimal rationals.

But the formula we want, $\exists q\in\mathbb{Q}^{>0} m_{Ax\wedge A(Tx)}(q)$, is not the same as the formula $\exists q\in(\mathbb{Q}^{>0})^* m_{Ax\wedge A(Tx)}(q)$; the latter allows for the posibility that the measure of $A\cap TA$ is ``positive'' but infinitesimal.

The issue is that when we talk about the distinction between sets of positive measure and sets of measure $0$, we want to work with the non-compact domain $\mathbb{Q}^{>0}$.  In an ultraproduct, all the domains in the language of the model are compact, so we can only discuss the distinction between positive and $0$ measure by including quantifiers outside the language of the ultraproduct.

However the \emph{transfer principle} tells us that we have the following correspondence:
\begin{lemma}
  Consider a statement in the form $\forall x\in D\exists y\in S\phi(x,y)$ where $D$ is arbitrary, $S$ is countable, and $\phi$ is a statement in the language $\mathcal{L}$.  Then $\mathfrak{M}\vDash\forall x\exists y\phi(x,y)$ iff for every $x\in D$ there is a $y\in S$ such that for most $N$, $\mathfrak{M}_N\vDash\phi(x,y)$.
\end{lemma}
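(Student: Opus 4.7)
The plan is to reduce this to the transfer principle applied pointwise to the inner formula $\phi(x,y)$ for each fixed choice of $x \in D$ and $y \in S$. The decisive observation is that, despite the notation, the outer quantifiers $\forall x \in D$ and $\exists y \in S$ are \emph{external}: they range over the standard sets $D$ and $S$ themselves, not over any sort of the language $\mathcal{L}$, and so they are not interpreted by the ultraproduct at all. Thus the symbol $\mathfrak{M} \vDash \forall x \in D \, \exists y \in S \, \phi(x,y)$ is, by definition, shorthand for the meta-statement ``for every $x \in D$ there is some $y \in S$ with $\mathfrak{M} \vDash \phi(x,y)$'', and for each such fixed pair $(x,y)$ the expression $\phi(x,y)$ is genuinely a sentence of $\mathcal{L}$ (with $x,y$ either indexing members of a family of $\mathcal{L}$-formulas, as with the $m_{\phi,q}$ relation symbols used earlier, or interpreted by fixed constants of the language).

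First I would fix arbitrary $x \in D$ and $y \in S$ and apply the transfer principle (i.e.\ the standard {\L}o\'s-style theorem on the ultraproduct) to the single $\mathcal{L}$-sentence $\phi(x,y)$, obtaining that $\mathfrak{M} \vDash \phi(x,y)$ holds exactly when $\mathfrak{M}_N \vDash \phi(x,y)$ holds for most $N$. Then, with the outer quantifiers understood externally as above, I would substitute this equivalence inside the statement ``for every $x \in D$ there exists $y \in S$ with $\mathfrak{M} \vDash \phi(x,y)$''. This immediately yields the right-hand side of the biconditional, and both directions are formally symmetric---there is no clever rearrangement of quantifiers required.

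The step that really needs to be made carefully---less an obstacle than the conceptual point of the lemma---is the justification that the outer quantifier blocks commute with the ultraproduct construction. This is exactly the distinction emphasized in the preceding discussion: had $D$ or $S$ been incorporated as sorts of $\mathcal{L}$, the ultraproduct would replace them with enlarged sorts containing nonstandard elements (such as infinitesimal rationals), and the biconditional could genuinely fail. Keeping $D$ and $S$ external is precisely what allows the proof to collapse to a single pointwise invocation of the transfer principle.
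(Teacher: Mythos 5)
Your proof is correct and is exactly the argument the paper intends: the paper states this lemma without proof, directly after the phrase ``the transfer principle tells us that we have the following correspondence,'' so the work you do---recognizing that the quantifiers over $D$ and $S$ are external, indexing families of $\mathcal{L}$-sentences rather than ranging over internal sorts, so that the biconditional decomposes pointwise into independent applications of {\L}o\'s's theorem for each fixed $(x,y)$---is the unstated justification. One observation worth making explicit: the hypothesis that $S$ is countable is never used in your argument (and is not needed for the equivalence); it is there only so that the statement conforms to the $\Pi_2$ template the paper is tracking.
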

In other words, we have a correspondence between $\Pi_2$ sentences in $\mathfrak{M}$ and those in the $\mathfrak{M}_N$, namely that $\Pi_2$ statements which are \emph{uniformly} true in the finite models are true in the infinite one.

For statements which are not $\Pi_2$, we do not have such a correspondence.  For example, the ergodic proof of Szemer\'edi's Theorem uses the mean ergodic theorem, the relevant case of which is the following statement:
\begin{quote}
Let $\alpha_n(x)=\frac{1}{n}\sum_{i\leq n}\chi_{A}(x+n)$ and let $||\alpha(x)||=\sqrt{\frac{1}{|X|}\sum_{x\in X}\alpha^2(x)}$.  For every $\epsilon>0$, there is an $n$ such that for all $m\geq n$
\[||\alpha_n(x)-\alpha_m(x)||<\epsilon.\]
\end{quote}
We may treat the inner part of the statement---$||\alpha_n(x)-\alpha_m(x)||<\epsilon$---as being a formula in $\mathcal{L}$ (working this out in detail requires expanding the language and getting into technicalities about representing the measure as part of $\mathcal{L}$).  Even so, this isn't $\Pi_2$, since we have three quantifiers over countable domains on the outside.

In fact, this statement is true in the finite models, for utterly trivial reasons---in $([1,2N],A_N)$, take $n$ to be much, much larger than $N$, so that for any $m\geq n$, $\alpha_m(x)$ is the function which is very close to $\frac{1}{N}\sum_{i\leq N}\chi_A(i)$ at every point.  This is a vacuous argument, and it doesn't reflect the real mathematical content of the mean ergodic theorem.  We can't expect to express the full content of saying that an infinite sequence converges in a finite model.

The functional interpretation tells us that every statement implies a $\Pi_2$ statement which captures its computational content.  The right statement here is
\begin{quote}
  For every $\epsilon>0$ and every function $F$, there is an $n$ such that if $F(n)\geq n$ then $||\alpha_n-\alpha_{F(n)}||<\epsilon$.
\end{quote}
This is also true in the finite models, with a bound on $n$ that depends on $\epsilon$ and $F$, but \emph{not} on the size of the finite model.  This is the statement about finite models which matches up correctly with the mean ergodic theorem in the ultraproduct.

We can make this statement slightly nicer with the following observation: given a function $F$, define $F'$ by
\[F'(n)=\left\{
  \begin{array}{ll}
    \text{The least }m\in[n,F(n)]\text{ such that }||\alpha_n-\alpha_m||\geq\epsilon&\text{if there is one}\\
F(n)&\text{otherwise}
  \end{array}\right..
\]
Then $F'$ is computable from $F$, and by appling the statement above to $F'$ instead of $F$, we obtain
\begin{quote}
  For every $\epsilon>0$ and every function $F$, there is an $n$ such that if $F(n)\geq n$ then for all $m\in[n,F(n)]$, $||\alpha_n-\alpha_{m}||<\epsilon$.
\end{quote}
The usual mean ergodic theorem tells us that the averages $\alpha_n$ stabilize.  This version tells us that we can find arbitrarily long intervals on which the average remains stable.  This is known as the \emph{metastability} of the average \cite{avigad:MR2550151,tao:MR2408398,MR2912715}.

A recent generalization of this idea is in \cite{towsner:abs_cont}, where the corresponding notion of metastability for a double limit is used to give an effective version of a theorem about $L^1$ functions; this is applied in \cite{towsner:banach} to give explicit bounds for a theorem about Banach spaces---the ``failure of local unconditionality of the James space''---whose usual proof involves an ultraproduct.

Note that in this case it's incidental that what the functional interpretations extracts from proofs which used ultraproducts is computable---the same method would work with any language and any replacement for our finite models.  What's really important is that the functional interpretation tells us how to correspond statements about the ultraproduct model to statements in the original models.  In almost every situation where we care about this, it makes sense to view us as extracting information computable from the original models, but there has been some recent investigation of this idea more abstractly, where one has the right syntactic features to extract information using the functional interpretation, but where one need not be working with computable information \cite{MR2964881,MR2200579,MR3145191}

\section{Further Reading}

There are two well-established and complementary introductions to the functional interpretation.  Avigad and Feferman \cite{avigad:MR1640329} give a thorough introduction to the formal theory of the functional interpretation in the most important settings, including a number of variants and many applications within proof theory.  Kohlenbach \cite{kohlenbach:MR2445721} gives a detailed guide to the applications of the functional interpretation outside logic, especially in analysis.


\bibliographystyle{plain}
\bibliography{../../Bibliographies/main}
\end{document}